\theoremstyle{plain}
\newtheorem{theorem}{Theorem}[section]
\newtheorem{lemma}[theorem]{Lemma}
\newtheorem{proposition}[theorem]{Proposition}
\theoremstyle{definition}
\theoremstyle{remark}
\newtheorem{remark}[theorem]{Remark}
\numberwithin{equation}{section}
\newcommand{\N}{\mathbb N} 
\newcommand{\R}{\mathbb R}
\newcommand{\diam}{\operatorname{diam}}
\newcommand{\E}{{\mathcal E}}
\newcommand{\F}{{\mathcal F}}
\newcommand{\Ra} {\Rightarrow}
\newcommand{\eps}{\varepsilon}
\newcommand{\dx}{\,\mathrm{d}x}
\newcommand{\dy}{\,\mathrm{d}y}
\newcommand{\ds}{\,\mathrm{d}s}
\def\XXint#1#2#3{{\setbox0=\hbox{$#1{#2#3}{\int}$ }
\vcenter{\hbox{$#2#3$ }}\kern-.6\wd0}}
\date{\today}
\title[Connected Coulomb Columns]{Connected Coulomb Columns: Analysis and Numerics}
\author{P.\ Dondl}
\address{Patrick Dondl\\Abteilung f\"ur Angewandte Mathematik\\Albert-Ludwigs-Universit\"at Freiburg\\Hermann-Herder-Str.~10\\ 79104 Freiburg i.~Br.\\Germany}
\email{patrick.dondl@mathematik.uni-freiburg.de}
\author{M.\ Novaga}
\address{Matteo Novaga\\ Dipartimento di Matematica Universit\`a di Pisa\\ Largo Bruno Pontecorvo 5\\ 56127 Pisa\\ Italy}
\email{matteo.novaga@unipi.it}
\author{S.\ Wojtowytsch}
\address{Stephan Wojtowytsch\\
Program in Applied and Computational Mathematics\\
Princeton University\\
205 Fine Hall - Washington Road\\
Princeton, NJ 08544
}
\email{stephanw@princeton.edu}
\author{S.\ Wolff-Vorbeck}
\address{Steve Wolff-Vorbeck\\Abteilung f\"ur Angewandte Mathematik\\Albert-Ludwigs-Universit\"at Freiburg\\Hermann-Herder-Str.~10\\ 79104 Freiburg i.~Br.\\Germany}
\email{steve.wolff-vorbeck@mathematik.uni-freiburg.de}
\subjclass[2020]{5308, 28A75, 65N99}
\keywords{Charged droplets, phase field, connectedness, Ohta-Kawasaki, Modica-Mortola, perimeter, topological constraint, Steiner tree, diblock copolymer}
\begin{document}

\begin{abstract}
We consider a version of Gamow's liquid drop model with a short range attractive perimeter-penalizing potential and a long-range Coulomb interaction of a uniformly charged mass in $\R^3$. Here we constrain ourselves to minimizing among the class of shapes that are columnar, i.e., constant in one spatial direction. Using the standard perimeter in the energy would lead to non-existence for any prescribed cross-sectional area due to the infinite mass in the constant spatial direction. In order to heal this defect we use a connected perimeter instead. We prove existence of minimizers for this connected isoperimetric problem with long-range interaction and study the shapes of minimizers in the small and large cross section regimes. For an intermediate regime we use an Ohta-Kawasaki phase field model with connectedness constraint to study the shapes of minimizers numerically.
\end{abstract}

\maketitle

\section{Introduction}

In this article we study an isoperimetric problem with an added long-range repulsive term in two space dimensions. The repulsive term can be seen as the interaction energy of a uniformly charged mass, when restricted to columnar states, i.e., for a given set $E\subset \R^2$, the charged mass is given as $\{(x_1,x_2,x_3)\subset \R^3 : (x_1,x_2)\in E, x_3\in\R\}$. This leads to a logarithmic interaction kernel. 

The study of charged droplets is commonly referred to as Gamow's liquid drop model \cite{gamow1928quantentheorie}, originally devised for an explanation for the shape of nuclear cores due to a competition between short range attractive (e.g., perimeter type) and long range repulsive (e.g., Coulombic) potentials. In many cases, this leads to existence of minimizers up to a critical mass (which then are usually spherical), and nonexistence thereafter (since, when breakup into more than one piece is energetically expedient, these pieces can further reduce the interaction energy by increasing their distance) \cite{MR3509012,MR3055587,MR3272365,MR3226747,MR3218265,MR3794526}. Nonexistence for larger masses can of course be prevented by considering droplets confined to bounded domains \cite{MR3069960} or background potentials, where breakup or loss of existence may again depend on the relative strengths of the confining and repulsive potentials \cite{MR3947064, MR3251907}.

In our setting, due to the restriction to columnar sets and a full Coulombic interaction, we are faced with nonexistence of minimizers for any prescribed area of a two-dimensional slice $E$ when considering a standard perimeter since the extension to three dimensions always has infinite mass. Instead of introducing a background potential, however, we opt for a different avenue, also pursued in \cite{dayrens2019connected}: we replace the standard perimeter with a `connected perimeter', which can be briefly described by the relaxation of the perimeter of a connected, $L^1$-approximating set. For a precise statement on the setting, see Section \ref{section preliminaries}. To conform with the common language in the mathematics literature concerning such charged mass models, we will, in the following, refer to the prescribed area of a two-dimensional slice as the `mass' in the problem -- this is a slight abuse of nomenclature, as it is in fact a mass density when considering the associated three-dimensional problem. Furthermore, even though we are talking about a minimization problem which is effectively two-dimensional, we will refer to the sets $E$ as charged `droplets'.

This connected perimeter used here was introduced in \cite{dayrens2019connected}. A phase-field variant of the connected perimeter was developed in \cite{MR4011685}.

Our main analytic results are the following. 

\begin{enumerate}
\item Minimizers exist for any mass. This part, in Section \ref{sec:existence}, follows closely the arguments in \cite{dayrens2019connected}.

\item A charged droplet of small mass aggregates in a disk. We call this the {\em perimeter-dominated} regime since minimization of surface tension drives the behavior. Our connectedness constraint does not influence the local behavior in this regime -- it does, however, remove the global minimizer of droplets disappearing at infinity in opposite directions. The analysis is conducted in Section \ref{section small mass} and the precise result is stated in Theorem \ref{prop2}. The main difficulty here is that the connected perimeter does not directly yield sufficient regularity for minimizers to study the Euler-Lagrange-equation of the energy.

\item Charged drops of large mass organize in long and thin objects. We call this the {\em repulsion-dominated} regime. This regime is considered in Section \ref{section large mass}. Precise statements are given in Theorems \ref{theorem long} and \ref{theorem thin}, together with an asymptotic expansion of the minimal energy in terms of problem parameters in Theorem \ref{theorem scaling}.
\end{enumerate}

To study the intermediate regime, we use an efficient numerical method to find shapes of minimizing configurations in numerical simulations using an Ohta-Kawasaki phase-field approximation of our problem, including the connectedness constraint. The phase-field simulations of course take place on bounded domains, so some aspects of confinement as well as further changes when requiring simple-connectedness are studied there as well.

The article is structured as follows. A rigoros introduction to the problem is given in Section \ref{section preliminaries}. We present analytic results concerning existence and shapes of minimizers of our functional in Section \ref{section analysis}, before developing the phase-field approach and discussing numerical results in Section \ref{numerics}.

\section{Preliminaries}\label{section preliminaries}
We study the variational problems associated with the energy functionals
\begin{align}\label{thefunc1}
\F^{\lambda}_{C}(\chi_{E}) \coloneqq \ \overline{P^{r}_{C}}(\{\chi_{E} =1\}) + \lambda \int\limits_{\mathbb{R}^{2}}\int\limits_{\mathbb{R}^{2}} \log\left(\frac{1}{|x-y|}\right)\chi_{E}(x)\chi_{E}(y) ~\mathrm{d}x\mathrm{d}y,
\end{align}
\begin{align}\label{thefunc2}
\F^{\lambda}_{S}(\chi_{E}) \coloneqq \overline{P^{r}_{S}}(\{\chi_{E} =1\}) + \lambda\int\limits_{\mathbb{R}^{2}}\int\limits_{\mathbb{R}^{2}} \log\left(\frac{1}{|x-y|}\right)\chi_{E}(x)\chi_{E}(y) ~\mathrm{d}x\mathrm{d}y,
\end{align}
where $\chi_{E}$ is the characteristic function of the subset $E\subset \mathbb{R}^{2}$ with finite perimeter and volume/mass $|E|= m>0$ and $\lambda > 0$ is a parameter. Here $\overline{P^{r}_{C}}$ and $\overline{P^{r}_{S}}$ describe the connected and the simply connected perimeter of the set $E$ defined by 
\begin{align*}
\overline{P^{r}_{C}}(E)=\inf \left\{ \liminf_{n\rightarrow \infty} P(E_{n}) \  | \  E_{n}\rightarrow E \ \text{in} \ L^{1}, E_{n} \ \text{connected and} \ C^{\infty}-\text{smooth}\right\},
\end{align*}
and 
\begin{align*}
\overline{P^{r}_{S}}(E)=\inf \left\{ \liminf_{n\rightarrow \infty} P(E_{n}) \ | \  E_{n}\rightarrow E \ \text{in} \ L^{1}, E_{n} \ \text{simply connected and} \ C^{\infty}-\text{smooth}\right\},
\end{align*}
where $P(\cdot)$ is the usual perimeter of a set.
It was shown recently \cite{dayrens2019connected}, that for essentially bounded sets $E\subset \mathbb{R}^{2}$ such that $\partial E =\partial_{*}E$ modulo sets of zero $\mathcal{H}^{1}$-measure the identity
\[
\overline{P^{r}_{C}}(E)=P(E)+2\mathcal{S}t(E)
\]
 holds, where $\mathcal{S}t(E)$ is the length of the Steiner tree of $\bar{E}$, i.e., 
\[
\mathcal{S}t(E)=\inf \{ \mathcal{H}^{1}(K)| E\cup K \ \text{connected} \}.
\]
 Above, $\mathcal{H}^{1}$ denotes the $1$-dimensional Hausdorff measure on $\mathbb{R}^{2}$ and $\partial_{*}E$ is the essential boundary of $\bar{E}$, see Definition 3.60 in~\cite{MR1857292}. For the existence of Steiner trees, their properties and regularity see~\cite{MR3018174}.

In the following, we consider the minimization problem
\begin{align}\label{min}
\min_{\substack{E\subset \mathbb{R}^{2}, \\ |E|=m}} \F^{\lambda}_{C/S}(\chi_{E}).
\end{align}
We prove that minimizers exist for all $ m > 0$ and all $\lambda > 0$ and describe the shape of such minimizers for small masses $m$/small parameters $\lambda$ and large masses $m$/large parameters $\lambda$, respectively.

\begin{remark}
We note that due to the unboundedness of the logarithmic potential, the functional $ P(\{\chi_{E} =1\}) + \lambda \int\limits_{\mathbb{R}^{2}}\int\limits_{\mathbb{R}^{2}} \log\left(\frac{1}{|x-y|}\right)\chi_{E}(x)\chi_{E}(y) ~\mathrm{d}x\mathrm{d}y$ on $\R^2$ does not admit minimizers for any $\lambda>0$ or any prescribed mass. Usage of the connected perimeter is therefore necessary for the arguments below.  
\end{remark}

\section{Analytical Results}\label{section analysis}

\subsection{Existence of Minimizers} \label{sec:existence}
We prove the existence of solutions of the problem in~\eqref{min} for all masses $m>0$ and all $\lambda > 0$. This section follows the proof in \cite{dayrens2019connected}, where the nonlocal part of the energy was given by
\begin{align*}
\int\limits_{\mathbb{R}^{2}}\int \limits_{\mathbb{R}^{2}} \frac{1}{|x-y|^{\alpha}}\chi_{E}(x)\chi_{E}(y) ~\mathrm{d}x\mathrm{d}y
\end{align*}
with some $\alpha \in (0,2)$. In \cite{dayrens2019connected}, the singularity at the origin is stronger, leading to a stronger local repulsion. On the other hand, the power law interaction decays to zero at infinity while the logarithm of inverse distance approaches negative infinity. This leads to a stronger non-local ``attraction from infinity'' in our model. While both interaction models drive sets to be more `spread out', the precise mechanisms are different.

To adapt the proof from Theorem 5.2 in~\cite{dayrens2019connected} to our case we first state the following simple proposition.
\begin{proposition}\label{prop0}
 For $m>0$, $R>0$ we have
\begin{align*}
\inf_{\substack {E \subset B_{R}(0), \\ |E|=m}}\lambda \int\limits_{\mathbb{R}^{2}} \int\limits_{\mathbb{R}^{2}} \log\left(\frac{1}{|x-y|}\right)\chi_{E}(x)\chi_{E}(y) ~\mathrm{d}x\mathrm{d}y \geq - m^{2}\lambda \log(\diam E) \geq - m^{2}\lambda \log(2R). 
\end{align*}
\end{proposition}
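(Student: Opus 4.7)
The proof is essentially a two-line pointwise bound followed by integration. The plan is as follows.

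First, I would establish the pointwise lower bound on the logarithmic kernel. For any $x, y \in E$ we have $|x - y| \leq \diam E$, so (monotonicity of $\log$ and flipping sign when inverting)
\begin{equation*}
\log\left(\frac{1}{|x-y|}\right) \geq \log\left(\frac{1}{\diam E}\right) = -\log(\diam E).
\end{equation*}
Note that this inequality is valid regardless of whether $\diam E$ is larger or smaller than $1$, i.e., regardless of the sign of $\log(\diam E)$; no case distinction is needed. Strictly speaking, one should also note that since $|E| = m > 0$, the set is not a null set and so $\diam E > 0$, making the logarithm well-defined.

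Next, I would integrate this pointwise bound against $\chi_E(x) \chi_E(y) \, \mathrm{d}x\,\mathrm{d}y$ over $\R^2 \times \R^2$. Since $-\log(\diam E)$ is a constant with respect to $x$ and $y$, and $\int_{\R^2} \chi_E = |E| = m$, this yields
\begin{equation*}
\int_{\R^2}\int_{\R^2} \log\left(\frac{1}{|x-y|}\right)\chi_E(x)\chi_E(y)\,\mathrm{d}x\,\mathrm{d}y \geq -m^2 \log(\diam E).
\end{equation*}
Multiplying by $\lambda > 0$ gives the first claimed inequality. Taking the infimum over admissible $E$ on the left-hand side preserves the inequality for a fixed right-hand side, but since the right-hand side $-m^2 \lambda \log(\diam E)$ still depends on $E$, I would actually apply the estimate to each $E$ and then use the monotonicity $\diam E \leq 2R$ coming from the containment $E \subset B_R(0)$. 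Since $\log$ is increasing, $\log(\diam E) \leq \log(2R)$ and hence $-m^2 \lambda \log(\diam E) \geq -m^2 \lambda \log(2R)$, which gives the second inequality and finishes the proof after taking the infimum over $E$.

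There is no serious obstacle here; the only mild point to be aware of is the sign behavior of $\log$ across the threshold $\diam E = 1$ (and likewise $2R = 1$), which might suggest a case split, but the monotonicity argument above handles both regimes uniformly.
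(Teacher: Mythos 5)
Your proof is correct and is precisely the "basic estimates on the logarithm and the diameter" that the paper's one-line proof alludes to: the pointwise bound $\log(1/|x-y|)\geq -\log(\diam E)$ for $x,y\in E$, integration against $\chi_E\otimes\chi_E$, and monotonicity of the logarithm to pass from $\diam E$ to $2R$. No discrepancy with the paper's approach; your added remarks on the sign of the logarithm and on $\diam E>0$ are harmless and correct.
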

\begin{proof}
The inequality follows from basic estimates on the logarithm and the diameter.
\end{proof}
\begin{remark}
Here and below we denote the diameter of a measurable set $E$ as
\[
\diam(E) = \sup\left\{|x-y|\::\: |E\cap B_r(x)|, \:|E\cap B_r(y)| >0\quad\forall\ r>0\right\}.
\]
As usual, $B_R(x)$ denotes the ball of radius $R$ around a point $x$, in our case always in $\R^2$.
\end{remark}

We also require a continuity result, analogous to \cite[Lemma 5.1]{dayrens2019connected}.
\begin{lemma}\label{lem3}
Let $R>0$ and let $E_n$ be a sequence of sets such that
\[
E_n\subseteq B_R(0), \qquad \chi_{E_n}\to \chi_E \quad\text{in }L^1(B_R(0)).
\]
Then 
\[
\int\limits_{E_n}\int\limits_{E_n} \log\left(\frac{1}{|x-y|}\right)  ~\mathrm{d}x\mathrm{d}y \to \int\limits_{E}\int\limits_{E} \log\left(\frac{1}{|x-y|}\right)  ~\mathrm{d}x\mathrm{d}y.
\]
\end{lemma}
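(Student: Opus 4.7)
The plan is to rewrite the difference of the two integrals as
\[
I_n := \int_{B_R(0)\times B_R(0)} K(x,y)\,f_n(x,y)\,\mathrm{d}x\,\mathrm{d}y,
\]
with kernel $K(x,y):=\log(1/|x-y|)$ and density difference $f_n(x,y):=\chi_{E_n}(x)\chi_{E_n}(y)-\chi_E(x)\chi_E(y)$, and then apply Cauchy--Schwarz on the bounded product domain. The main observation that makes everything work is that although $K$ is unbounded on the diagonal, it is locally square-integrable in two dimensions, so no cutoff or truncation procedure is needed.

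The first step is to verify $K\in L^2(B_R(0)\times B_R(0))$. Fixing $x$ and passing to polar coordinates around it gives
\[
\int_{B_R(0)}\int_{B_R(0)}(\log|x-y|)^2\,\mathrm{d}y\,\mathrm{d}x \leq |B_R(0)|\cdot 2\pi\int_0^{2R}(\log r)^2\,r\,\mathrm{d}r,
\]
which is finite since $r(\log r)^2\to 0$ as $r\to 0^+$. The bound depends only on $R$.

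The second step is to show $\|f_n\|_{L^2(B_R(0)\times B_R(0))}\to 0$. From the algebraic identity
\[
f_n(x,y) = \bigl[\chi_{E_n}(x)-\chi_E(x)\bigr]\chi_{E_n}(y) + \chi_E(x)\bigl[\chi_{E_n}(y)-\chi_E(y)\bigr]
\]
and Fubini one obtains $\|f_n\|_{L^1(B_R(0)\times B_R(0))}\leq 2|B_R(0)|\,\|\chi_{E_n}-\chi_E\|_{L^1(B_R(0))}\to 0$ by hypothesis. Since $|f_n|\leq 1$ pointwise, the interpolation bound $\|f_n\|_{L^2}^2\leq \|f_n\|_{L^1}\|f_n\|_{L^\infty}$ yields $L^2$-convergence to zero.

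Finally, Cauchy--Schwarz gives $|I_n|\leq \|K\|_{L^2}\,\|f_n\|_{L^2}\to 0$, which is the claim. The only potential obstacle is the logarithmic singularity of $K$ at the diagonal, but this is absorbed once and for all by the local $L^2$-integrability of $K$ in dimension two; everything else is elementary.
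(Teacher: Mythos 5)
Your proof is correct, but it takes a genuinely different route from the paper's. The paper simply observes that $\log(1/|x-y|)$ is integrable on $B_R(0)\times B_R(0)$ and invokes dominated convergence: after passing to a subsequence along which $\chi_{E_n}\to\chi_E$ almost everywhere, the products $\chi_{E_n}(x)\chi_{E_n}(y)$ converge a.e.\ on $B_R(0)\times B_R(0)$ and are dominated by the integrable kernel, and a subsequence-of-subsequences argument upgrades this to convergence of the full sequence. You instead exploit the stronger fact that the kernel is locally \emph{square}-integrable in the plane and pair it, via Cauchy--Schwarz, against the $L^2$-smallness of $f_n$, which you obtain from the $L^1$-smallness together with the trivial bound $|f_n|\leq 1$. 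Both arguments rest on the same feature of the logarithmic singularity (it lies in every $L^p_{\mathrm{loc}}$ for $p<\infty$), but yours is fully constructive and quantitative: it gives the explicit modulus $|I_n|\leq \|K\|_{L^2}\bigl(2|B_R(0)|\,\|\chi_{E_n}-\chi_E\|_{L^1}\bigr)^{1/2}$ with no subsequence extraction, which is in the spirit of the Lipschitz-type estimates the paper later derives in \eqref{eq:quasi_min2}. The paper's version is shorter and needs only $K\in L^1$, which would matter for more singular kernels, but for the logarithm in two dimensions both hypotheses are equally available. All the individual steps in your write-up (the polar-coordinate estimate, the telescoping identity for $f_n$, and the interpolation $\|f_n\|_{L^2}^2\leq\|f_n\|_{L^1}\|f_n\|_{L^\infty}$) check out.
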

\begin{proof}
The logarithm of inverse distance is integrable on $B_R(0)\times B_R(0)$, so the result follows from the dominated convergence theorem.
\end{proof}

This allows to prove the statement ensuring existence of minimizers for any mass.
\begin{theorem}\label{theorem existence of minimizers}
For all $m > 0$ and all $\lambda > 0$ the minimization problems 
\begin{align}\label{min1}
\min \left \{ \overline{P^{r}_{C}}(E) + \lambda \int\limits_{E}\int\limits_{E} \log\left(\frac{1}{|x-y|}\right) ~\mathrm{d}x\mathrm{d}y \ \bigg| \ E\subset \mathbb{R}^{2} \ \text{measurable},|E|=m\right\},
\end{align}
\begin{align}\label{min2}
\min \left \{ \overline{P^{r}_{S}}(E) + \lambda \int\limits_{E}\int\limits_{E} \log\left(\frac{1}{|x-y|}\right) ~\mathrm{d}x\mathrm{d}y \ \bigg| \ E\subset \mathbb{R}^{2} \ \text{measurable},|E|=m\right\}
\end{align}
for $\F_C^\lambda$ and $\F_S^\lambda$, respectively, admit solutions. Furthermore, there exists a constant $C>0$, depending only on $m$ and $\lambda$, so that any solution $E$ to the minimization problem above satisfies $\diam(E) < C$.
\end{theorem}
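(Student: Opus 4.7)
The plan is to apply the direct method of the calculus of variations: use an explicit competitor for an a priori upper bound on the infimum, deduce a uniform diameter bound on configurations of bounded energy, extract an $L^1$-limit by $BV$-compactness inside a fixed ball (after translation), and pass to the limit via lower semicontinuity of the relaxed perimeter together with Lemma \ref{lem3}. The scheme is identical for $\F^\lambda_C$ and $\F^\lambda_S$, so I will write $\overline{P^r_{C/S}}$ when either one works.

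Taking as competitor the disk $B_{r_0}(0)$ with $\pi r_0^2 = m$, which is smooth and simply connected, we get $\overline{P^r_C}(B_{r_0}) = \overline{P^r_S}(B_{r_0}) = 2\sqrt{\pi m}$ and a finite explicit logarithmic integral, so $\inf \F^\lambda_{C/S} \le C_0(m,\lambda)$. The core estimate is then
\[
\overline{P^r_{C/S}}(E) \ \ge\ 2\,\diam(E)
\]
for every $E \subset \R^2$ of finite relaxed perimeter. First, any smooth connected planar set $F$ satisfies $P(F) \ge 2\diam(F)$ (e.g.\ by projecting $\partial F$ onto the line through two diametral points, or by passing to the convex hull and using $P(F) \ge P(\mathrm{conv}\,F) \ge 2\diam(\mathrm{conv}\,F)$). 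Second, if $F_k$ are smooth connected (resp.\ simply connected) $L^1$-approximants with $P(F_k) \to \overline{P^r_{C/S}}(E)$ and $x, y$ are two density points of $E$, then $|F_k \cap B_\eps(x)| > 0$ and $|F_k \cap B_\eps(y)| > 0$ for large $k$, so $\liminf_k \diam(F_k) \ge |x-y|$; taking the supremum over density pairs yields $\diam(E) \le \liminf_k \diam(F_k) \le \overline{P^r_{C/S}}(E)/2$. Combining with Proposition \ref{prop0}, any $E$ with $\F^\lambda_{C/S}(E) \le C_0$ satisfies $C_0 \ge 2\diam(E) - \lambda m^2 \log(2\diam E)$, and since the right-hand side diverges with the diameter we get $\diam(E) \le D = D(m,\lambda)$. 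Applied to any minimizer this already yields the claimed uniform diameter bound.

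For existence, let $\{E_n\}$ be a minimizing sequence. By the diameter bound and translation invariance of $\F^\lambda_{C/S}$ we may assume $E_n \subset B_D(0)$, and since $P(E_n) \le \overline{P^r_{C/S}}(E_n) \le C_0 + \lambda m^2 \log(2D)$, standard $BV$-compactness produces a subsequence converging in $L^1$ to some measurable $E \subset \overline{B_D(0)}$ with $|E| = m$. Lemma \ref{lem3} provides continuity of the logarithmic term, while $\overline{P^r_{C/S}}$ is $L^1$-lower semicontinuous by its definition as an $L^1$-relaxation (a standard diagonal extraction of smooth connected, resp.\ simply connected, approximants of the $E_n$). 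Hence $\F^\lambda_{C/S}(E) \le \liminf_n \F^\lambda_{C/S}(E_n) = \inf \F^\lambda_{C/S}$, so $E$ is a minimizer.

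The main obstacle is the diameter estimate $\diam(E) \le \overline{P^r_{C/S}}(E)/2$: $L^1$-convergence does not \emph{a priori} control diameters (thin tendrils of vanishing measure can stretch arbitrarily far), so one must work at the level of the smooth approximants where $P(F_k) \ge 2\diam(F_k)$ is classical, and only then descend to $E$ via lower semicontinuity of the diameter. Without such uniform control a minimizing sequence could split into two pieces escaping to infinity, the very pathology that rules out the ordinary perimeter in the remark after \eqref{min}.
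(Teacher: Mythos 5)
Your proposal is correct and follows essentially the same route as the paper: the lower bound $\overline{P^r_{C/S}}(E)\ge 2\,\diam(E)$ combined with Proposition \ref{prop0} to get a uniform diameter bound, then translation, $BV$-compactness in a fixed ball, Lemma \ref{lem3}, and lower semicontinuity of the relaxed perimeter. In fact you spell out more carefully than the paper does how the inequality $P(F)\ge 2\diam(F)$ for connected smooth approximants descends to the limit set via density points, a step the paper states only tersely via \eqref{inequality}.
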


\begin{proof}
We consider the case of $\F_C^\lambda$, the proof for $\F_S^\lambda$ proceeds analogously. 

Before proceeding we mention that for any connected set $E$ of finite perimeter, we have 
\begin{align}\label{inequality}
2\operatorname{diam}(E) \leq |\partial E|
\end{align}
and thus obtain, using Proposition \ref{prop0},
\begin{align}\label{eq1}
\mathcal{F}^{\lambda}_{C}(\chi_E) \geq  2\operatorname{diam}(E) - \lambda m^{2} \log(\operatorname{diam}(E)) \geq \min_{r>0} (2r-\lambda m^{2} \log(r)) > -\infty.
\end{align}

Let now $E_{n}$ be a minimizing sequence for the problem in~\eqref{min1} with $|E_{n}|=m < \infty$. We immediately obtain from \eqref{eq1} that $\sup_n\diam(E_n) < C(m,\lambda)$. Using the translation invariance of $\F^{\lambda}_{C}$, we may thus assume that there exists $R>0$ such that $E_n\subset B_R(0)$ for all $n\in\N$, modulo Lebesgue null sets. The usual compactness of sets of finite perimeter, together with Lemma \ref{lem3}, yield the existence result.
\end{proof}

\begin{remark} \label{rem:scaling_bound}
Note that by approximation, estimate \eqref{eq1} carries over to the minimizer, denoted by $E_{m,\lambda}$, so that
\begin{equation}\label{eq scaling lower bound}
\F_C(\chi_{E_{m,\lambda}}) \geq  2\operatorname{diam}(E_{m,\lambda}) - \lambda m^{2} \log(\operatorname{diam}(E_{m,\lambda})).
\end{equation}
\end{remark}

\subsection{Shape of Minimizers for Small Mass}\label{section small mass}
Now we consider the shape of solutions of~\eqref{min1} and~\eqref{min2} for small masses $m$ or small values of $\lambda > 0$ respectively. The goal of this section is to show that for small values $\lambda >0$ or small masses $m > 0$ the unique solution of~\eqref{min1} and~\eqref{min2} is a disk. 

We first note that, for a given set $E$ and $\mu>0$, we have
\[
|\mu E| = \mu^2\,|E|, \qquad \overline{P^r_C}(\mu E) = \mu\,\overline{P^r_C}(E)
\]
since $\overline{P^r_C}$ scales like the perimeter functional, and
\begin{align*}
\int_{\mu E} \int_{\mu E} \log\left(\frac1{|x-y|}\right) \dx \dy 
	&= \int_{E} \int_{E} \log\left(\frac1{|\mu x-\mu y|}\right) \mu^2\dx \,\mu^2\dy\\
	&= \mu^4 \int_E\int_E \log\left(\frac1{|x- y|}\right) + \log \left(\frac1{\mu}\right)\dx\dy\\
	&= \mu^4 \int_E\int_E \log\left(\frac1{|x- y|}\right)\dx\dy - \mu^4\log(\mu) \,|E|^2.
\end{align*}
The last term on the right hand side is independent of $E$ when $|E|=m$ is fixed. Thus the minimization problems 
\[
\text{minimize}\quad \overline{P^r_C}(E) + \lambda \int_{E} \int_{E} \log\left(\frac1{|x-y|}\right) \dx \dy
\]
in the class of sets with mass $|E|=m$ and 
\[
\text{minimize}\quad \sqrt{\frac m\pi} \cdot \left[\overline{P^r_C}(E) + \lambda\left(\frac m\pi\right)^{3/2} \int_{E} \int_{E} \log\left(\frac1{|x-y|}\right) \dx \dy\right]
\]
in the class of sets with mass $|E|= \pi$ are equivalent by rescaling with $\mu = \sqrt{\frac m\pi}$. We conclude that considering the small mass limit for fixed $\lambda$ and the small $\lambda$ limit for fixed mass are equivalent and thus confine ourselves to the case where the mass $|E|=m=\pi$ is fixed and $\lambda$  becomes small.
\begin{remark}
Of course, the scaling argument remains unchanged if $\overline{P^r_C}$ is replaced by the standard perimeter $P$.
\end{remark}
We now consider the functional 
\begin{align}\label{thefunc_const}
\F^{\lambda}(\chi_{E})\coloneqq P(E) + \lambda \int\limits_{E}\int\limits_{E} \log\left(\frac{1}{|x-y|}\right) ~\mathrm{d}x\mathrm{d}y 
\end{align}
and the constrained optimization problem
\begin{align}\label{min_const}
\min \left \{ \F^{\lambda}(\chi_{E}) \bigg| \ E\subset \mathbb{R}^{2} \ \text{measurable},|E|=\pi, E\subset B_\rho \right \},
\end{align}
with $\rho>2C(\pi,\lambda)$, the diameter bound from Theorem \ref{theorem existence of minimizers}. Existence of solutions for this problem immediately follows from standard existence theory of minimizers for lower semicontinuous functionals bounded from below. Let now $E$ be a minimizer of $\F^{\lambda}(\chi_{E})$ in the class above and  $E'\subset B_\rho(0)$. We then obtain
\begin{align}\label{eq:quasi_min1}
\int\limits_{E}\int\limits_{E} \log\left(\frac{1}{|x-y|}\right)~\mathrm{d}x\mathrm{d}y &- \int\limits_{E'}\int\limits_{E'}\log\left(\frac{1}{|x-y|}\right)~\mathrm{d}x\mathrm{d}y \notag \\ 
&= \int\limits_{\mathbb{R}^{2}}\int\limits_{\mathbb{R}^{2}} \log\left(\frac{1}{|x-y|}\right) \chi_{E}(x)\chi_{E}(y)~\mathrm{d}x\mathrm{d}y \notag \\
 &- \int\limits_{\mathbb{R}^{2}}\int\limits_{\mathbb{R}^{2}} \log\left(\frac{1}{|x-y|}\right) \chi_{E'}(x)\chi_{E'}(y)~\mathrm{d}x\mathrm{d}y \notag \\
&=  \int\limits_{\mathbb{R}^{2}}\int\limits_{\mathbb{R}^{2}} \log\left(\frac{1}{|x-y|}\right) \chi_{E}(x)(\chi_{E}(y)-\chi_{E'}(y))~\mathrm{d}x\mathrm{d}y \notag \\
&+\int\limits_{\mathbb{R}^{2}}\int\limits_{\mathbb{R}^{2}} \log\left(\frac{1}{|x-y|}\right) \chi_{E'}(x)(\chi_{E}(y)-\chi_{E'}(y))~\mathrm{d}x\mathrm{d}y. 
\end{align}
Taking now $\omega_{1} = \log\left(\frac{1}{|x-y|}\right) * \chi_{E}$ and $\omega_{2}= \log\left(\frac{1}{|x-y|}\right) * \chi_{E'}$ we can estimate 
\begin{align}\label{eq:quasi_min2}
\int\limits_{E}\int\limits_{E} \log\left(\frac{1}{|x-y|}\right)~\mathrm{d}x\mathrm{d}y&- \int\limits_{E'}\int\limits_{E'}\log\left(\frac{1}{|x-y|}\right)~\mathrm{d}x\mathrm{d}y \notag \\
&\leq \int\limits_{\mathbb{R}^{2}}\omega_{1}(y)(\chi_{E}(y)-\chi_{E'}(y)) ~\mathrm{d}y + \int\limits_{\mathbb{R}^{2}} \omega_{2}(y)(\chi_{E}(y)-\chi_{E'}(y)) ~\mathrm{d}y \notag \\
&\leq \|\omega_{1}\|_{L^{\infty}(E\Delta E')}|E\Delta E'|+\|\omega_{2}\|_{L^{\infty}(E\Delta E')}|E\Delta E'|.
\end{align}
Finally using that $\|\omega_{1}\|_{L^{\infty}(E\Delta E')} \leq \|\omega_{1}\|_{L^{\infty}(B_\rho(0))}$ and $\|\omega_{2}\|_{L^{\infty}(E\Delta E')} \leq \|\omega_{2}\|_{L^{\infty}(B_\rho(0))}$ we have 
\begin{align*}
P(E)\leq P(E') + \left(|\omega_{1}\|_{L^{\infty}(B_\rho(0))}+\|\omega_{2}\|_{L^{\infty}(B_\rho(0))}\right) |E\Delta E'| \leq P(E') + C |E\Delta E'|. 
\end{align*}
Thus minimizers of $\F^{\lambda}(\chi_{E})$ are quasi-minimizers of the perimeter constrained to lie within a smooth set. It follows that for a solution $E$ of~\eqref{min_const} the boundary $\partial E$ is of class $C^{1,1}$ \cite{MR664576}, see also \cite{MR2976521}.
Thus, recalling the bound on the diameter, 
we conclude that every minimizer has a well-defined curvature, and the boundary satisfies the Euler-Lagrange equation for~\eqref{thefunc_const}. Therefore, the procedure from~\cite{MR3055587} is applicable, and we obtain the following result. 
\begin{proposition}\label{prop1}
There exists $\lambda_1$ such that for all $\lambda \leq \lambda_1$ the unique solution of~\eqref{min_const} is the unit disk. 
\end{proposition}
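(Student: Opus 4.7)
The plan is to follow the Knüpfer–Muratov-type strategy flagged by the authors: show that minimizers converge to the unit disk as $\lambda\to 0$, upgrade this to convergence in a norm strong enough to parameterize $\partial E$ as a graph over $\partial B_1$, and then close the argument with Fuglede's quantitative isoperimetric inequality against a controlled second variation of the nonlocal term.

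First I would prove $L^1$ convergence to the disk. Take $\lambda_n\to 0$ and let $E_n\subset B_\rho$ be corresponding minimizers of \eqref{min_const}, translated so the centroid sits at the origin. By comparing $\F^{\lambda_n}(\chi_{E_n})$ with $\F^{\lambda_n}(\chi_{B_1})$, one gets $P(E_n)\leq P(B_1)+O(\lambda_n)$. Compactness of sets of finite perimeter in $B_\rho$, together with Lemma \ref{lem3} and lower semicontinuity of $P$, yields a subsequential $L^1$-limit $E_\infty$ with $|E_\infty|=\pi$ and $P(E_\infty)\leq P(B_1)$. By the classical isoperimetric inequality, $E_\infty$ is (a translate of) $B_1$.

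Next I would upgrade this to a graph parameterization. From the quasi-minimality bound $P(E_n)\leq P(E')+C(\rho,\lambda_n)|E_n\triangle E'|$ derived above the proposition, the regularity theory of \cite{MR664576} gives uniform $C^{1,1/2}$ (indeed $C^{1,1}$) control of $\partial E_n$. Combined with the $L^1$ convergence and the uniform density estimates that accompany quasi-minimality, $\partial E_n$ converges to $\partial B_1$ in the Hausdorff sense and in $C^{1,\alpha}$ for every $\alpha<1$. Hence for all sufficiently small $\lambda$, a minimizer $E$ is nearly spherical: $\partial E=\{(1+u(\theta))(\cos\theta,\sin\theta):\theta\in[0,2\pi)\}$ with $\|u\|_{C^1}$ arbitrarily small, and with the vanishing-mean and vanishing-first-Fourier-mode normalizations imposed by the volume and centroid constraints.

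Finally I would close the argument by comparing second variations. Fuglede's theorem for nearly spherical sets in $\R^2$ gives
\begin{equation*}
P(E)-P(B_1)\;\geq\; c_0\,\|u\|_{H^1(S^1)}^2
\end{equation*}
for $\|u\|_{C^1}$ small enough, with $c_0>0$ independent of $u$. For the nonlocal term I would Taylor-expand around the disk: since the first variation at $B_1$ is a constant on $\partial B_1$ (a Lagrange multiplier for the volume constraint) and $u$ has zero mean, the linear term vanishes, and writing the volume-corrected difference as a boundary double integral of $\log(1/|x-y|)$ against $u(\theta_1)u(\theta_2)$ plus cubic remainder yields
\begin{equation*}
\int_E\int_E\log\frac{1}{|x-y|}\,\mathrm{d}x\,\mathrm{d}y\;-\;\int_{B_1}\int_{B_1}\log\frac{1}{|x-y|}\,\mathrm{d}x\,\mathrm{d}y\;\geq\;-C_0\,\|u\|_{H^{1/2}(S^1)}^2
\end{equation*}
with $C_0$ independent of $\lambda$. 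Since $\|u\|_{H^{1/2}}\leq\|u\|_{H^1}$, adding the two inequalities gives $\F^\lambda(\chi_E)-\F^\lambda(\chi_{B_1})\geq (c_0-\lambda C_0)\,\|u\|_{H^1}^2$, which is strictly positive unless $u\equiv 0$ as soon as $\lambda\leq\lambda_1:=c_0/(2C_0)$. Uniqueness up to translations (which are absorbed into our centroid normalization) follows.

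The main obstacle I expect is the second-variation estimate for the logarithmic kernel: unlike the Riesz kernels of \cite{MR3055587}, the log kernel changes sign and does not have a clean diagonalization on $S^1$, so I would need to compute the Fourier multipliers of the boundary-boundary operator $(u,v)\mapsto\iint_{S^1\times S^1}\log(1/|x(\theta_1)-x(\theta_2)|)u(\theta_1)v(\theta_2)\,\mathrm{d}\theta_1\mathrm{d}\theta_2$ and verify that the resulting quadratic form is bounded by an $H^{1/2}$ (or at worst $H^1$) norm with a constant independent of $\lambda$. Once this is in place, the matching against Fuglede's inequality is automatic, and the smallness threshold $\lambda_1$ is explicit.
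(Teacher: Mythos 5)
Your proposal is correct, but it takes a genuinely different route from the paper. The paper also starts from the quasi-minimality and $C^{1,1}$ regularity established before the proposition, but then invokes Lemmas 7.2--7.4 of \cite{MR3055587} to obtain convexity of the minimizer and the $L^1$-type quantitative isoperimetric inequality $|E\Delta B_1(x_0)|\le C\sqrt{D(E)}$, and closes by showing (via $L^\infty$ potential estimates for convex sets of equal measure, citing \cite{MR2456887}) that the nonlocal excess over the ball is at most $C\lambda\,|E\Delta B_1|^2$; the two-sided squeeze $c|E\Delta B_1|^2\le D(E)\le C\lambda|E\Delta B_1|^2$ then forces $E=B_1$ for small $\lambda$. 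You instead go through the Fuglede route: uniform regularity of quasi-minimizers upgrades the $L^1$ convergence to a nearly spherical graph $1+u$, Fuglede's inequality gives $P(E)-P(B_1)\ge c_0\|u\|_{H^1}^2$, and the second variation of the nonlocal term is absorbed for small $\lambda$. Both arguments hinge on the same two inputs (uniform quasi-minimality constant for $\lambda\le 1$, and barycenter normalization), and both prove uniqueness only up to translation, as in the paper. What your route buys is an explicit and in fact stronger spectral picture: the worry you flag at the end is unfounded, since the boundary-boundary operator with kernel $\log(1/|x-y|)$ on the unit circle diagonalizes cleanly in Fourier modes with multipliers $\pi/|k|$ for $k\ne 0$ and $0$ for $k=0$ (the unit circle has logarithmic capacity one), so the quadratic form is positive semidefinite on mean-zero perturbations and trivially bounded by $C\|u\|_{L^2}^2\le C\|u\|_{H^1}^2$. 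What the paper's route buys is that it never needs a graph parameterization or a second-variation expansion, only $L^1$-asymmetry control. One small point to tighten in your write-up: the volume and barycenter constraints do not make $\int u$ and the first Fourier mode vanish exactly, only up to $O(\|u\|_{L^2}^2)$, so the ``linear'' term of your Taylor expansion contributes another harmless quadratic error rather than vanishing identically; this is the standard bookkeeping in Fuglede-type arguments and does not affect the conclusion.
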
 
\begin{proof}
Let $B_{1}=B_{1}(x_{0})$, where $x_{0}$ is the barycenter of $E$. Due to the regularity of $\partial E$ and the uniform boundedness constraint, $E\subset B_{\rho}(0)$, we can apply \cite[Lemma 7.2]{MR3055587}, \cite[Lemma 7.3]{MR3055587} and \cite[Lemma 7.4]{MR3055587} to the problem in \eqref{min_const}. Thus we deduce, that for small $\lambda$ the set $E$ is convex and fulfills 
\begin{align}\label{eq5.0}
|E\Delta B_{1}(x_{0})|\leq C \sqrt{D(E)}
\end{align}
with a universal constant $C> 0$ and the isoperimetric deficit $D(E)$ given by 
\[
D(E)\coloneqq \frac{|\partial E|}{2\pi}-1. 
\] 
Now we have $\F^{\lambda}(\chi_{E}) \leq \F^{\lambda}(\chi_{B_{1}})$ by the minimization property of $E$, which is equivalent to  
\begin{align}
D(E) \leq \frac{\lambda}{2\pi}\left( \;\int\limits_{B_{1}}\int\limits_{B_{1}} \log\left(\frac{1}{|x-y|}\right)~\mathrm{d}x\mathrm{d}y- \int\limits_{E}\int\limits_{E}\log\left(\frac{1}{|x-y|}\right)~\mathrm{d}x\mathrm{d}y\right).
\end{align}
Transforming like in~\eqref{eq:quasi_min1} we get
\begin{align*}
\int\limits_{B_{1}}\int\limits_{B_{1}} \log\left(\frac{1}{|x-y|}\right)~\mathrm{d}x\mathrm{d}y&- \int\limits_{E}\int\limits_{E}\log\left(\frac{1}{|x-y|}\right)~\mathrm{d}x\mathrm{d}y \\
&\leq \int\limits_{\mathbb{R}^{2}}\omega_{1}(y)(\chi_{B_{1}}(y)-\chi_{E}(y)) ~\mathrm{d}y + \int\limits_{\mathbb{R}^{2}} \omega_{2}(y)(\chi_{B_{1}}(y)-\chi_{E}(y)) ~\mathrm{d}y \\
&\leq \|\omega_{1}\|_{L^{\infty}(B_{1}\Delta E)}|B_{1}\Delta E|+\|\omega_{2}\|_{L^{\infty}(B_{1}\Delta E)}|B_{1}\Delta E|
\end{align*}
with $\omega_{1} = \log\left(\frac{1}{|x-y|}\right) * \chi_{B_{1}}$ and $\omega_{2}= \log\left(\frac{1}{|x-y|}\right) * \chi_{E}$.
Using now that due to the convexity of $E$ and due to $\pi =|E|=|B_{1}|$ we have
\begin{align}\label{eq5.1}
\|\omega_{2}\|_{L^{\infty}(B_{1}\Delta E)}\leq C \|\omega_{1}\|_{L^{\infty}(B_{1}\Delta E)} \leq C |B_{1}\Delta E|
\end{align}
with a constant $C$ independent of $E$, see results from~\cite{MR2456887}, we can finally conclude 
\begin{align}\label{eq5.2}
D(E) \leq C \lambda |B_{1}\Delta E|^{2}.
\end{align}
Combining \eqref{eq5.0} and \eqref{eq5.2}, we obtain 
\begin{align}\label{eq5}
c |B_{1}\Delta E|^{2} \leq D(E) \leq C \lambda  |B_{1}\Delta E|^{2}
\end{align}
for some constants $c$ and $C$, which are independent of $E$. This means that as long as $\lambda$ is small enough we have $D(E)=0$ and thus find $E=B_{1}(x_{0})$. 
\end{proof}

We can now use the above result to reach a conclusion for the connected perimeter, but without constraint.
\begin{theorem}\label{prop2}
There exists $\lambda_{0}$ such that for all $\lambda \leq \lambda_{0}$ the unique minimizers of $\F_{C}^{\lambda}(\chi_{E})$ and $\F_{S}^{\lambda}(\chi_{E})$ are the unit disk. 
\end{theorem}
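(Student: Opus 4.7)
\emph{Proof plan.} The plan is to reduce the unconstrained problems for $\F_C^\lambda$ and $\F_S^\lambda$ to the constrained standard-perimeter problem \eqref{min_const} already solved in Proposition \ref{prop1}. The key structural observation is that the relaxed perimeters $\overline{P^{r}_{C}}$ and $\overline{P^{r}_{S}}$ dominate the ordinary perimeter $P$ on every competitor while coinciding with it on the disk itself.

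First, I would record two elementary comparisons. For every $E$ of finite perimeter, $L^1$-lower semicontinuity of $P$ applied along any connected smooth approximating sequence, followed by passage to the infimum in the definition, yields
\begin{align*}
\overline{P^{r}_{C}}(E) \geq P(E) \qquad \text{and} \qquad \overline{P^{r}_{S}}(E) \geq P(E).
\end{align*}
Conversely, since $B_1$ is smooth and simply connected, the constant sequence $E_n \equiv B_1$ is admissible in both relaxations, so $\overline{P^{r}_{C}}(B_1) = \overline{P^{r}_{S}}(B_1) = P(B_1) = 2\pi$, and therefore $\F_C^\lambda(\chi_{B_1}) = \F_S^\lambda(\chi_{B_1}) = \F^\lambda(\chi_{B_1})$.

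Next, I would confine minimizers to a fixed ambient ball $B_\rho(0)$ uniformly in $\lambda$. Comparison with $B_1$ gives $\F_C^\lambda(\chi_E) \leq \F^\lambda(\chi_{B_1})$ for every minimizer $E$, and combined with the lower bound from Remark \ref{rem:scaling_bound}, namely $\F_C^\lambda(\chi_E) \geq 2\diam(E) - \lambda \pi^2 \log \diam(E)$, this produces a diameter bound $\diam(E) \leq R$ uniform for $\lambda \in (0,\bar\lambda]$ (absorb the logarithmic term using, e.g., $\log r \leq r^{1/2}$ for $r \geq 1$). Fixing any $\rho > R$ and translating, every minimizer of $\F_C^\lambda$ lies in $B_\rho(0)$ and is thus admissible in \eqref{min_const}. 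Let $\lambda_1$ be the threshold supplied by Proposition \ref{prop1} for this $\rho$ and set $\lambda_0 = \min(\lambda_1, \bar\lambda)$; then for $\lambda \leq \lambda_0$ one has the chain
\begin{align*}
\F^\lambda(\chi_{B_1}) = \F_C^\lambda(\chi_{B_1}) \geq \F_C^\lambda(\chi_E) \geq \F^\lambda(\chi_E) \geq \F^\lambda(\chi_{B_1}),
\end{align*}
whose four terms are linked by $\overline{P^{r}_{C}}(B_1) = 2\pi$, minimality of $E$ for $\F_C^\lambda$, the comparison $\overline{P^{r}_{C}}(E) \geq P(E)$, and Proposition \ref{prop1}. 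All inequalities must be equalities, so $E$ minimizes \eqref{min_const} and coincides with $B_1$ up to translation by the uniqueness clause of Proposition \ref{prop1}. The argument for $\F_S^\lambda$ is identical.

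The main subtlety I foresee is the uniformity of the diameter bound on a full neighborhood of $\lambda = 0$, so that a single confining ball $B_\rho(0)$ can be fed into Proposition \ref{prop1}; once that is in place, the remainder is only the inequality chain above and invocation of the uniqueness already established in the constrained setting.
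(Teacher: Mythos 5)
Your proposal is correct and follows essentially the same route as the paper's proof: confine minimizers to a fixed ball via the diameter bound, use $P \leq \overline{P^{r}_{C/S}}$ together with $P(B_1)=\overline{P^{r}_{C/S}}(B_1)$ to sandwich the energy, and invoke the uniqueness in Proposition~\ref{prop1}. You are in fact slightly more careful than the paper in making the diameter bound explicitly uniform over a neighborhood of $\lambda=0$ (the paper simply restricts to $\lambda<1$ and asserts a uniform bound), which is a welcome clarification rather than a deviation.
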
 
\begin{proof}
We restrict ourselves to $\F_{C}^{\lambda}(\chi_{E})$ as the result for $\F_{S}^{\lambda}(\chi_{E})$ follows in the same manner and consider only $\lambda < 1$. Let again $B_{1}=B_{1}(x_{0})$ with the barycenter $x_{0}$ of $E$.
As derived in Theorem \ref{theorem existence of minimizers}, minimizers $E$ of $\F_{C}^{\lambda}(\chi_{E})$ satisfy $\diam(E)\le C$ with a uniform bound, so we may assume that up to translation we have $E\subset B_{2C}(0)$. Now that 
\[
P(E)\leq \overline{P^{r}_{C}}(E)
\]
for all $E\subset B_{2C}(0)$ we have 
\[
\F^{\lambda}(B_{1})\leq\F^{\lambda}(E)\leq \F_{C}^{\lambda}(E)
\]
for all $E\subset B_{2C}(0)$ by Proposition~\ref{prop1}. The assertion then follows from $P(B_{1})=\overline{P^{r}_{C}}(B_{1})$.
\end{proof}

\subsection{Shape of Minimizers for Large Mass}\label{section large mass}

In this section, we consider the large mass limit as the opposite extreme. We obtain the first two contributions to the energy expansion in the large length limit. The shape of minimizers is described only coarsely, showing that their diameter scales weakly like $\lambda$ and that they do not concentrate mass close to any point on scales significantly smaller than $O(\lambda)$. It remains open whether they are convex, and how minimizers look in the intermediate regime. Denote
\begin{align}
e_C(\lambda)&:= \min \left \{ \F_C^\lambda(\chi_E)\:\big|\: E\subseteq \R^2\text{ measurable, }|E| = \pi\right\},\\
e_S(\lambda)&:=  \min \left \{ \F_S^\lambda(\chi_E)\:\big|\: E\subseteq \R^2\text{ measurable, }|E| = \pi\right\}.
\end{align}

\begin{theorem}\label{theorem scaling}
For large $\lambda$, the expansions
\begin{align*}
e_C(\lambda) &= -\pi^2\,\lambda\,\log\left(\frac{\pi^2\lambda}2\right) + \tilde e_C(\lambda)\,\lambda + O(1)\\
e_S(\lambda) &= -\pi^2\,\lambda\,\log\left(\frac{\pi^2\lambda}2\right) + \tilde e_S(\lambda)\,\lambda + O(1)
\end{align*}
hold for coefficient functions satisfying 
\[
\pi^2 \:\leq\: \tilde e_C, \tilde e_S \:\leq\: 3\pi^2.
\]
\end{theorem}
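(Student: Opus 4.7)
The plan is to establish matching two-sided bounds on $e_{C}(\lambda)$ and $e_{S}(\lambda)$ via a sharp diameter-based lower bound and an explicit thin-rectangle trial set.

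\emph{Lower bound.} I would invoke Remark~\ref{rem:scaling_bound}: for any minimizer $E_{\pi,\lambda}$ of $\F_{C}^{\lambda}$,
$$\F_{C}^{\lambda}(\chi_{E_{\pi,\lambda}}) \;\geq\; 2\diam(E_{\pi,\lambda}) - \lambda\pi^{2}\log\diam(E_{\pi,\lambda}).$$
The right-hand side depends on the minimizer only through $d=\diam(E_{\pi,\lambda})$, and the scalar function $d\mapsto 2d-\lambda\pi^{2}\log d$ attains its unique global minimum on $(0,\infty)$ at $d^{\ast}=\pi^{2}\lambda/2$, with value $\pi^{2}\lambda-\lambda\pi^{2}\log(\pi^{2}\lambda/2)$. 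Hence
$$e_{C}(\lambda) \;\geq\; -\lambda\pi^{2}\log\!\left(\tfrac{\pi^{2}\lambda}{2}\right) + \pi^{2}\lambda,$$
which gives $\tilde e_{C}(\lambda)\geq \pi^{2}$. For $\F_{S}^{\lambda}$, the inclusion of simply-connected smooth approximations into the class of connected ones yields $\overline{P^{r}_{S}}\geq\overline{P^{r}_{C}}$ pointwise, hence $e_{S}(\lambda)\geq e_{C}(\lambda)$ and the same lower bound applies.

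\emph{Upper bound.} I would use a thin rectangle $R_{L}=[-L/2,L/2]\times[-w/2,w/2]$ with $Lw=\pi$ as a trial set. Since $R_{L}$ is convex, it is both connected and simply connected and its Steiner tree has length zero, so $\overline{P^{r}_{C}}(R_{L})=\overline{P^{r}_{S}}(R_{L})=P(R_{L})=2(L+w)$. The analytic core is to establish
$$I(L,w):=\iint_{R_{L}}\iint_{R_{L}}\log\frac{1}{|x-y|}\dx\dy \;=\; -\pi^{2}\log L + \tfrac{3\pi^{2}}{2} + o(1)\quad\text{as }L\to\infty,\ w=\pi/L,$$
which I would derive by the change of variables $u=x_{1}-y_{1}$, $v=x_{2}-y_{2}$, splitting the domain at $|u|=w$, writing $\log\sqrt{u^{2}+v^{2}}=\log|u|+\tfrac{1}{2}\log(1+v^{2}/u^{2})$ on the outer region, and using the elementary identity $\int_{-L}^{L}(L-|u|)\log|u|\,\mathrm{d}u=L^{2}\log L-\tfrac{3L^{2}}{2}$ combined with $Lw=\pi$. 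Optimizing in $L$ selects $L=\pi^{2}\lambda/2$ (so $w=2/(\pi\lambda)$), and the trial energy evaluates to
$$\F_{C}^{\lambda}(\chi_{R_{L}}) \;=\; 2L + 2w + \lambda I(L,w) \;=\; -\lambda\pi^{2}\log\!\left(\tfrac{\pi^{2}\lambda}{2}\right) + \tfrac{5\pi^{2}\lambda}{2} + O(1).$$
Since $R_{L}$ is a valid competitor for both $e_{C}$ and $e_{S}$, this yields $\tilde e_{C}(\lambda),\tilde e_{S}(\lambda)\leq \tfrac{5\pi^{2}}{2}\leq 3\pi^{2}$ for $\lambda$ large enough, completing the proof.

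\emph{Main obstacle.} The delicate point is the careful accounting of subleading contributions to $I(L,w)$. Specifically, one must show that the near-singular patch $|u|<w$ contributes only $O(Lw^{3}|\log w|)=O(\log\lambda/\lambda^{2})$, and that the Taylor remainder in $\log(1+v^{2}/u^{2})$ contributes $O(w^{2})=O(1/\lambda^{2})$; multiplied by $\lambda$ both remain $o(1)$, safely within the claimed $O(1)$ error. Beyond the diameter bound from Section~\ref{sec:existence}, no regularity of minimizers is required, which is why the resulting window $[\pi^{2},3\pi^{2}]$ is wide and the precise second-order constant is not pinned down.
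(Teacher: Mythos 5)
Your proposal is correct and follows essentially the same route as the paper: the diameter-based lower bound from Remark~\ref{rem:scaling_bound} optimized at $\diam = \pi^2\lambda/2$, and a thin rectangle of area $\pi$ as competitor for the upper bound. The only substantive difference is that you evaluate the rectangle's interaction integral asymptotically exactly (arriving at the sharper constant $\tfrac{5\pi^2}{2}$), whereas the paper uses the cruder estimate $\log\bigl(1/|x-y|\bigr)\le\log\bigl(1/|x_1-y_1|\bigr)$ together with the choice $r=\pi^2\lambda$ to land at $3\pi^2$; both stay within the claimed window $[\pi^2,3\pi^2]$.
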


\begin{proof}
{\bf Lower bound.} We deduce from \eqref{eq scaling lower bound} that
\[
e_C(\lambda) \geq \inf_{r>0} \big[2r - \pi^2\lambda\,\log(r)\big].
\]
The right hand side is large for $r$ close to zero or very large, so a minimizing $r$ exists and satisfies
\[
0 = \frac{d}{dr}\big[2r - \pi^2\lambda\,\log(r)\big] = 2 - \frac{\pi^2\lambda}r\qquad\Ra\quad r = \frac{\pi^2\lambda}2.
\]
It follows that
\[
e_C(\lambda) \geq 2\,\frac{\pi^2\lambda}2 - \pi^2\lambda\,\log\left(\frac{\pi^2\lambda}2\right) \geq \pi^2\lambda\left[1 - \log\left(\frac{\pi^2\lambda}2\right)\right].
\]

{\bf Upper bound.} Denote $E^r = [0,r]\times [0,\pi r^{-1}]$. We compute
\[
\overline{P_C^r}(E^r) = \overline{P_S^r}(E_r) = P(E^r) = 2r + 2\pi\,r^{-1}
\]
and
\begin{align*}
\int_{E^r}\int_{E^r} \log\left(\frac1{|x-y|}\right)\dx\dy &\leq \int_{E^r}\int_{E^r} \log\left(\frac1{|x_1-y_1|}\right)\dx\dy\\
	&= \frac{\pi^2}{r^2} \int_0^r\int_0^r \log\left(\frac1{|x_1-y_1|}\right)\dx\dy\\
	&\leq \frac{\pi^2}{r^2} \int_0^r \int_0^r \log\left(\frac1{|x_1-r/2|}\right)\dx_1\dy_1\\
	&= \frac{2\pi^2}r \int_0^{r/2}\log\left(\frac1s\right)\ds\\
	&= - \frac{2\pi^2}r\,\int_0^{r/2}\log (s)\ds\\
	&= - \frac{2\pi^2}r \left[\frac{r}2\log\left(\frac r2\right) - \frac r2 +0\right]\\
	&= \pi^2\left[1-\log\left(\frac r2\right)\right].
\end{align*}
Taking $r = \pi^2\lambda$, we obtain
\[
e_C(\lambda) \leq \E_C^\lambda(E^\lambda) \leq 2\pi^2\lambda+ \frac{2}{\pi\lambda} + \pi^2\lambda\left[1-\log\left(\frac {\pi^2\lambda}{2}\right)\right].
\]

{\bf Conclusion.} We have shown that
\[
\pi^2\lambda\left[1 - \log\left(\frac{\pi^2\lambda}2\right)\right]
	\: \leq\: e_C(\lambda) \: \leq \: 2\pi^2\lambda + \frac{2\pi}\lambda + \pi^2\lambda\left[1-\log\left(\frac {\pi^2\lambda}{2}\right)\right].
\]
The upper and lower bound differ by $2\pi^2\lambda + \frac{2\pi}\lambda$.
\end{proof}

The energy competitors we constructed were long, thin squares. To leading order, the only important property of the sequence was that most mass in the system has a distance of order $\lambda$ to the point $x$ for any $x\in E$. The precise distance is irrelevant since $\log(c\lambda) - \log\lambda = \log c \ll \log\lambda$ for large $\lambda$. The same scaling is expected for any sequence of sets with similar properties, for example annular regions with very similar (large) radii, or a union of several fattened line segments meeting at the origin. This zeroth order analysis therefore cannot provide more precise information on the shape of minimizers in the large mass regime.

We can, however, show that minimizers must be long and thin in a suitable sense. First, we show that the length of minimizers scales roughly like $\lambda$.

\begin{theorem}\label{theorem long}
\begin{enumerate}
\item Let $E^\lambda$ be a sequence of sets such that 
\[
\limsup_{\lambda\to \infty} \frac{\F_C^\lambda(E^\lambda)}{\lambda\,\log\lambda} \leq 0.
\]
Then 
\[
\limsup_{\lambda\to \infty} \frac{\log(\diam(E^\lambda))}{\log\lambda} \leq 1.
\]

\item If 
\[
\limsup_{\lambda\to \infty} \frac{\F_C^\lambda(E^\lambda)}{\lambda\,\log\lambda} = -\pi^2,
\]
then 
\[
\lim_{\lambda\to \infty} \frac{\log(\diam(E^\lambda))}{\log\lambda} =1.
\]
\end{enumerate}
\end{theorem}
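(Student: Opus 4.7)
The plan is to leverage two lower bounds on $\F_C^\lambda$. The first, essentially \eqref{eq1} (extended from smooth connected competitors to arbitrary sets by the approximation argument sketched in Remark~\ref{rem:scaling_bound}), reads
\[
\F_C^\lambda(\chi_E) \:\geq\: 2\diam(E) - \pi^2\lambda\log\bigl(\diam(E)\bigr)\qquad\text{for all $|E|=\pi$,}
\]
and will be the workhorse for part (1). The second, coarser bound
\[
\F_C^\lambda(\chi_E) \:\geq\: -\pi^2\lambda\log\bigl(\diam(E)\bigr),
\]
obtained by dropping the nonnegative term $\overline{P^r_C}(E) \geq 0$ and applying Proposition~\ref{prop0} to the interaction integral, is what is needed for part (2).

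For part (1), I argue by contradiction. If $\limsup_\lambda \log\diam(E^\lambda)/\log\lambda > 1$, then along a subsequence $\lambda_k$ one has $\diam(E^{\lambda_k}) \geq \lambda_k^{1+\varepsilon}$ for some fixed $\varepsilon > 0$. Inserting this into the first bound and dividing by $\lambda_k\log\lambda_k$ gives
\[
\frac{\F_C^{\lambda_k}(\chi_{E^{\lambda_k}})}{\lambda_k\log\lambda_k} \:\geq\: \frac{2\lambda_k^\varepsilon}{\log\lambda_k} - \pi^2(1+\varepsilon) \;\longrightarrow\; +\infty,
\]
contradicting the hypothesis $\limsup \F_C^\lambda(E^\lambda)/(\lambda\log\lambda) \leq 0$.

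For part (2), the upper bound $\limsup \log\diam(E^\lambda)/\log\lambda \leq 1$ follows from part (1) since $-\pi^2 \leq 0$. For the matching lower bound, assume toward contradiction that $\liminf < 1$; then $\diam(E^{\lambda_k}) \leq \lambda_k^{1-\varepsilon}$ along a subsequence for some fixed $\varepsilon > 0$, and the coarser bound yields
\[
\frac{\F_C^{\lambda_k}(\chi_{E^{\lambda_k}})}{\lambda_k\log\lambda_k} \:\geq\: -\pi^2(1-\varepsilon) \:>\: -\pi^2,
\]
which forces $\limsup \F_C^\lambda(E^\lambda)/(\lambda\log\lambda) \geq -\pi^2(1-\varepsilon) > -\pi^2$, contrary to the hypothesis.

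The single point requiring any real care—essentially the only ``obstacle''—is to verify that the first lower bound applies to arbitrary $E$ with $|E|=\pi$ and $\overline{P^r_C}(E) < \infty$, not merely to smooth connected competitors. This is standard: take a smooth connected sequence $E_n \to E$ in $L^1$ with $P(E_n) \to \overline{P^r_C}(E)$, note that essential diameter is $L^1$-lower semicontinuous (for any $x$ with $|E\cap B_r(x)|>0$ for all $r>0$ one has $|E_n\cap B_r(x)|>0$ eventually, by $L^1$ convergence on $B_r(x)$), so $2\diam(E) \leq 2\liminf_n \diam(E_n) \leq \liminf_n P(E_n) = \overline{P^r_C}(E)$, and apply Lemma~\ref{lem3} inside a fixed ball to pass to the limit in the interaction integral. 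Once this extension is in hand, the rest of the argument is pure arithmetic.
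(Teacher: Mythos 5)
Your overall strategy coincides with the paper's: part (1) by contradiction from the lower bound \eqref{eq scaling lower bound}, part (2) by combining part (1) with the coarser bound coming from Proposition~\ref{prop0} after dropping the nonnegative perimeter term. Your part (2) is correct (a subsequence along which the ratio stays $\geq -\pi^2(1-\varepsilon)$ does force the limsup above $-\pi^2$), and your closing paragraph extending \eqref{eq1} from smooth connected competitors to arbitrary sets — via $L^1$-lower semicontinuity of the essential diameter and Lemma~\ref{lem3} — is actually spelled out more carefully than in Remark~\ref{rem:scaling_bound}.

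The one step that does not hold up as written is the displayed chain in part (1). From $\diam(E^{\lambda_k})\geq\lambda_k^{1+\varepsilon}$ you cannot simply ``insert'' this into $2\diam(E)-\pi^2\lambda\log(\diam(E))$: the diameter enters the second term with a negative sign, so a lower bound on the diameter yields an \emph{upper} bound on $-\pi^2\lambda\log(\diam(E))$; the inequality $-\pi^2\lambda_k\log(\diam(E^{\lambda_k}))\geq -\pi^2(1+\varepsilon)\lambda_k\log\lambda_k$ would require $\diam(E^{\lambda_k})\leq\lambda_k^{1+\varepsilon}$, the reverse of what you have (and nothing prevents $\diam(E^{\lambda_k})$ from growing superpolynomially). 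Your displayed estimate happens to be true, but only because $r\mapsto 2r-\pi^2\lambda\log r$ is increasing on $[\pi^2\lambda/2,\infty)$, so that for $r=\diam(E^{\lambda_k})\geq\lambda_k^{1+\varepsilon}\geq\pi^2\lambda_k/2$ the expression is bounded below by its value at $r=\lambda_k^{1+\varepsilon}$; this monotonicity observation (or some substitute) must be supplied. The paper sidesteps the same issue by a two-case argument: it first assumes $\log(\diam(E^\lambda))/\log\lambda$ has a finite limit $C$ along the subsequence, so that $\log(\diam(E^\lambda))\leq C\log\lambda$ gives an honest constant in the logarithmic term, and then removes the finiteness assumption with the splitting $R_\lambda=R_\lambda^{1/(1+\sigma/2)}R_\lambda^{\sigma/(2+\sigma)}\gg\lambda\log R_\lambda$. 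Either repair is one line; without one of them the step is a sign error.
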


\begin{proof}
Denote the diameter of $E^\lambda$ by $R^\lambda$. The intuition is as follows: If $R_\lambda\gg \lambda$, then the perimeter term becomes more expensive than the repulsion term can compensate for. On the other hand, if $R_\lambda\ll \lambda$, then we do not exploit the repulsion term fully.

{\bf Step 1.} In this step we show that if $\F_C^\lambda(E^\lambda) \leq 0$ for all large enough $\lambda$, then
\[
\limsup_{\lambda\to \infty} \frac{\log(R_\lambda)}{\log\lambda} \leq 1.
\]
We pass to a subsequence in $\lambda$ which realizes the upper limit.
Assume for the sake of contradiction that 
\[
1 < 1+2\sigma = \lim_{\lambda\to \infty} \frac{\log(R_\lambda)}{\log\lambda}  \leq C <\infty.
\]
Then
\[
R_\lambda = e^{\log(R_\lambda)} \geq e^{(1+\sigma)\,\log\lambda} = \lambda^{1+\sigma}
\]
for all sufficiently large $\lambda$, and thus by \eqref{eq scaling lower bound}
\[
\F^\lambda_C(E^\lambda) \geq 2R_\lambda - \pi^2\lambda\,\log(R_\lambda) \geq 2\,\lambda^{1+\sigma} - C\,\lambda\,\log\lambda>0
\]
for all sufficiently large $\lambda$. The assumption that the upper limit is finite can be removed by considering the splitting
\[
R_\lambda = R_\lambda^\frac1{1+\sigma/2} R_\lambda^\frac{\sigma}{2+\sigma} \geq \lambda^\frac{1+\sigma}{1+\sigma/2} R_\lambda^\frac{\sigma}{2+\sigma} \gg \lambda\,\log(R_\lambda).
\]

{\bf Step 2.} Assume now that
\[
\liminf_{\lambda\to\infty} \frac{\log(R_\lambda)}{\log\lambda} \leq 1-2\sigma <1.
\]
Again, we pass to a subsequence along the lower limit is realized. Since $E^\lambda$ is contained in a ball of radius $R_\lambda$, by Proposition \ref{prop0} we have
\[
 \int_{E^\lambda}\int_{E^\lambda}\log\left(\frac1{|x-y|}\right)\dx\dy \geq - \pi^2\log(R_\lambda) \geq - \pi^2\log\big(\lambda^{1-\sigma}\big) = -\pi^2(1-\sigma)\,\log\lambda
\]
for all sufficiently large $\lambda$, so
\[
\liminf_{\lambda\to\infty}\frac{\F^\lambda_C(E^\lambda)}{\lambda\,\log\lambda} \geq \liminf_{\lambda\to \infty} \frac1\lambda \int_{E^\lambda}\int_{E^\lambda}\log\left(\frac1{|x-y|}\right)\dx\dy \geq -\pi^2(1-\sigma) > -\pi^2.
\]

\end{proof}

In the next theorem, we show that minimizing sets are thin in the sense that mass does not concentrate close to a single point on a scale significantly shorter than $\lambda$.

\begin{theorem}\label{theorem thin}
For $\lambda>0$, let $E^\lambda$ be a family of sets such that for every $\lambda$ there exists a point $x^\lambda$ such that
\[
\big|E\cap B_{\lambda^\alpha}(x^\lambda)\big|\geq \bar c
\]
where $\alpha\in[0,1)$ and $\bar c>0$ does not depend on $\lambda$. Then
\[
\liminf_{\lambda\to\infty} \frac{ \F^\lambda_C(E^\lambda)}{\lambda\,\log\lambda} > -\pi^2.
\]
\end{theorem}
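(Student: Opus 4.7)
I would proceed by contradiction. The lower bound \eqref{eq scaling lower bound} together with Proposition \ref{prop0}, exactly as in the proof of Theorem \ref{theorem scaling}, shows that $\F_C^\lambda(E^\lambda) \geq \pi^2\lambda[1-\log(\pi^2\lambda/2)]$ for any family of sets of mass $\pi$, so in fact $\liminf_{\lambda\to\infty}\F_C^\lambda(E^\lambda)/(\lambda\log\lambda)\geq -\pi^2$ always holds. It therefore suffices to rule out equality. Suppose for contradiction that the liminf equals $-\pi^2$ and pass to a subsequence (still indexed by $\lambda$) along which the limit is attained. Theorem \ref{theorem long}(2) then yields $\log(\diam E^\lambda)/\log\lambda\to 1$; in particular, for every $\eps>0$ one has $\diam E^\lambda \leq \lambda^{1+\eps}$ for all sufficiently large $\lambda$.

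Set $F^\lambda := E^\lambda\cap B_{\lambda^\alpha}(x^\lambda)$ and $G^\lambda := E^\lambda\setminus F^\lambda$, so that $|F^\lambda|\geq \bar c$ and $|F^\lambda|+|G^\lambda|=\pi$, and split the double interaction integral according to the partition $E^\lambda\times E^\lambda = (F^\lambda\times F^\lambda) \cup (F^\lambda\times G^\lambda) \cup (G^\lambda\times F^\lambda)\cup (G^\lambda\times G^\lambda)$. On $F^\lambda\times F^\lambda$ the bound $|x-y|\leq 2\lambda^\alpha$ gives $\log(1/|x-y|)\geq -\alpha\log\lambda - \log 2$, while on the three remaining pieces we only use the crude bound $\log(1/|x-y|)\geq -\log(\diam E^\lambda)\geq -(1+\eps)\log\lambda$. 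Combining these with the identity $|F^\lambda|^2+2|F^\lambda|\,|G^\lambda|+|G^\lambda|^2 = \pi^2$ yields, up to an additive $O(1)$ error,
\[
\int_{E^\lambda}\int_{E^\lambda}\log\left(\frac{1}{|x-y|}\right)\dx\dy \geq -|F^\lambda|^2\,\alpha\log\lambda - (\pi^2-|F^\lambda|^2)(1+\eps)\log\lambda.
\]

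The right hand side equals $\log\lambda\cdot\bigl[-\pi^2(1+\eps) + |F^\lambda|^2(1+\eps-\alpha)\bigr]$, which, since $1+\eps-\alpha>0$, is monotone increasing in $|F^\lambda|^2$. Using the hypothesis $|F^\lambda|^2\geq\bar c^2$, the nonnegativity of $\overline{P^r_C}(E^\lambda)$, and sending $\eps\to 0$ after taking liminf, one obtains
\[
\liminf_{\lambda\to\infty}\frac{\F_C^\lambda(E^\lambda)}{\lambda\log\lambda} \geq -\pi^2 + \bar c^2(1-\alpha) > -\pi^2,
\]
contradicting the standing assumption. The one delicate point is the need for the matching upper bound on $\diam E^\lambda$: without invoking Theorem \ref{theorem long}, the three non-concentrated pieces of the integral could be sufficiently negative to absorb the concentration defect on $F^\lambda\times F^\lambda$. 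Everything else is elementary bookkeeping, expressing the idea that a fixed positive mass confined at scale $\lambda^\alpha\ll\lambda$ leaves too many pairs at distances much less than $\diam E^\lambda$ for the interaction to attain the optimal leading-order logarithmic gain.
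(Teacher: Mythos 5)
Your proposal is correct and follows essentially the same route as the paper: argue by contradiction, invoke Theorem \ref{theorem long} to get $\log(\diam E^\lambda)/\log\lambda\to 1$, split the interaction integral over $B_{\lambda^\alpha}(x^\lambda)$ and its complement, and arrive at the same quantitative gain $-\pi^2+\bar c^2(1-\alpha)$. The only (harmless) cosmetic differences are that you keep $|F^\lambda|^2\geq\bar c^2$ via monotonicity rather than passing to a further subsequence where the concentrated mass converges, and you phrase the diameter control as $\diam E^\lambda\leq\lambda^{1+\eps}$ with $\eps\to 0$ at the end.
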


\begin{proof}
Up to translation, we may assume that $x^\lambda\equiv 0$. We pass to a subsequence in $\lambda$ such that
\[
\lim_{\lambda\to\infty} \big|E^\lambda \cap B_{\lambda^\alpha}\big| = \bar c>0,
\]
where we denote $B_{\lambda^\alpha} = B_{\lambda^\alpha}(0)$. 
We split
\begin{align*}
\int_{E^\lambda}\int_{E^\lambda}\log\left(\frac1{|x-y|}\right)\dx \dy &= \int_{E^\lambda \cap B_{\lambda^\alpha}}\int_{E^\lambda\cap B_{\lambda^\alpha}}\log\left(\frac1{|x-y|}\right)\dx \dy\\
	&\qquad + 2\int_{E^\lambda\cap B_{\lambda^\alpha}}\int_{E^\lambda\setminus B_{\lambda^\alpha}}\log\left(\frac1{|x-y|}\right)\dx \dy \\ 
	&\qquad+ \int_{E^\lambda\setminus B_{\lambda^\alpha}}\int_{E^\lambda\setminus B_{\lambda^\alpha}}\log\left(\frac1{|x-y|}\right)\dx \dy\\
	&\geq -\big|E^\lambda \cap B_{\lambda^\alpha}\big|^2\,\log\left(2\lambda^\alpha \right)\\
	&\qquad - 2\,|E^\lambda \cap B_{\lambda^\alpha}|\,|E^\lambda\setminus B_{\lambda^\alpha}|\,\log(R_\lambda)\\
	&\qquad- |E^\lambda\setminus B_{\lambda^\alpha}|^2 \log(R_\lambda)
\end{align*}
using Proposition \ref{prop0} on the first term. Assume for the sake of contradiction that
\[
\lim_{\lambda\to\infty} \frac{ \F^\lambda_C(E^\lambda)}{\lambda\,\log\lambda} = -\pi^2.
\]
Using Theorem \ref{theorem long}, we find that
\[
\lim_{\lambda\to\infty}\frac{\log (R_\lambda)}{\log\lambda} = 1,
\]
so
\begin{align*}
\liminf_{\lambda\to\infty}\frac1{\log\lambda} \int_{E^\lambda}\int_{E^\lambda}\log\left(\frac1{|x-y|}\right)\dx \dy
	&\geq - \bar c^2\,\alpha - 2(\pi - \bar c)\bar c - (\pi - \bar c)^2 > - \pi^2.
\end{align*}
We have thus reached a contradiction.
\end{proof}

So a minimizing sequence for $\F_C^\lambda$ has to be increasingly `spread out' and cannot concentrate positive mass close to a single point $x^\lambda$ on any scale $\lambda^\alpha$ for $\alpha<1$. 

Note that the arguments above are specific to the plane, and that the analysis changes entirely if the sets $E$ are confined to a bounded domain $\Omega$ or a compact manifold, e.g., a sphere or a flat torus. On such domains, the Green's function for the Laplacian is bounded from below and `spreading out' is no longer an option. Understanding minimizers analytically no longer seems possible in this regime.

In the following, we discuss a numerical approach to finding minimizers in the intermediate regime where $\lambda$ is neither small nor large, or where $E$ is confined to a set of finite diameter. In many cases, confinement to a domain is a feature of the problem.  To approximate minimization in the plane, we can minimize $E$ among sets confined to a a large set $\Omega$. If the confinement is to be neglected, the diameter of $\Omega$ has to scale linearly with $\lambda$.

\section{Numerical Implementation}\label{numerics}
\subsection{Variational Problem and Gradient Flow}
To describe the functionals from~\eqref{thefunc1} and~\eqref{thefunc2} in a diffuse interface approach suitable for numerical treatment, we consider the Ohta-Kawasaki free energy functional, first mentioned in~\cite{ohta1986equilibrium}, 
\begin{align}\label{functional}
\F_\eps^{\lambda}(u)=\frac{1}{c_{0}}\int\limits_{\Omega} \frac{\epsilon}{2}|\nabla u|^{2} + \frac{1}{\epsilon} W(u) ~\mathrm{d}x\mathrm{d}y  + \frac{\lambda}{2} \int\limits_{\Omega} (u-\bar{m})(-\Delta) ^{-1} (u-\bar{m}) ~\mathrm{d}x\mathrm{d}y,
\end{align}
where we set 
\begin{align*}
\bar{m}= \frac{1}{|\Omega|} \int\limits_{\Omega} u ~ \mathrm{d}x\mathrm{d}y, \quad W(s) = \frac{1}{4}s^{2}(s-1)^{2}, \quad\text{and}\quad c_{0}=\frac{1}{6\sqrt{2}}.
\end{align*}
As usual, the small parameter $\eps>0$ takes the role of diffuse interface width and $\Omega$ is a bounded domain. At least formally, $\F_\eps^{\lambda}$ is an approximation of $\F^\lambda$ from Section \ref{section small mass}, when one neglects the influence of the boundary on the electrostatic potential -- and therefore simply obtains a logarithmic potential. We note that by the results in \cite{MR3101793,MR3176350}, some of this correspondence can be made rigorous in appropriate scaling regimes. The study of the Ohta-Kawasaki functional, however, also has independent merit.

To give the inverse of the Laplacian a proper meaning we incorporate Neumann boundary conditions and define the operator $\Delta_{N}^{-1}$ such that the $H^{-1}(\Omega)$-inner product can be described in one of the equivalent forms 
\[
     \left<w,v\right>_{H^{-1}(\Omega)}\coloneqq \left\{\begin{array}{ll} (-\Delta_{N}^{-1}w,v)  \\
          \big((-\Delta_{N})^{-\frac{1}{2}}w,(-\Delta_{N})^{{-\frac{1}{2}}}v\big) \ \ \ \ \ \forall \ v,w \in H_{*}^{1}(\Omega), \\
 (w,-\Delta_{N}^{-1}v)          
          
          \end{array}\right.
  \]
where $-\Delta_{N}^{-1}v=g$ means that $-\Delta g=v$ with $g \in H_{*}^{1}(\Omega)$ and $\frac{\partial g}{\partial \eta}_{|\partial \Omega}=0$ with the outer unit normal $\eta$ on $\partial \Omega$. The set  $H^{1}_{*}(\Omega)$ is given by 
\[
 H^{1}_{*}(\Omega)= \left\{ u \in H^{1}(\Omega) \  \bigg| \  \int\limits_{\Omega} u~\mathrm{d}x\mathrm{d}y =0 \right\}.
\]
Thus we may rewrite the functional in~\eqref{functional} and consider
\begin{align*}
\F_\eps^{\lambda}(u) = \frac{1}{c_{0}}\int\limits_{\Omega} \frac{\epsilon}{2}|\nabla u|^{2} + \frac{1}{\epsilon} W(u)~\mathrm{d}x\mathrm{d}y  + \frac{\lambda}{2} \|u-\bar{m}\|_{H^{-1}(\Omega)}^{2}.
\end{align*}
 Doing so, we can formulate the gradient flow of the functional in~\eqref{functional} as
\begin{align}\label{gradientflow}
\left<\partial_{t}u,\phi\right>_{H^{-1}(\Omega)}=- \delta_{u;\phi}\F_\eps^{\lambda}(u) \ \forall \ \phi \in H_{*}^{1}(\Omega)
\end{align} 
with the first variation $ \delta_{u;\phi}\F_\eps^{\lambda}(u)$ in $u$ in the direction of $\phi$. This yields
\begin{align*}
\left<\partial_{t}u,\phi\right>_{H^{-1}(\Omega)} &= \int\limits_{\Omega} \left\{ \frac{\epsilon}{c_{0}} \Delta u - \frac{1}{c_{0}\epsilon} W'(u)-\lambda (-\Delta_{N})^{-1}(u-\bar{m})\right\}\phi\,\mathrm{d}x\mathrm{d}y
\end{align*}
for all test functions $\phi \in H^{1}_{*}(\Omega)$. 

Setting now $w= \Delta_{N}^{-1}\partial_{t}u+ \lambda \Delta_{N}^{-1}(u-\bar{m})$ and using the mass conservation of solutions of the following system, see, e.g., \cite{parsons2012numerical}, we are finally required to solve 
\begin{align}\label{system}
\int\limits_{\Omega}\left\{ \partial_{t}u-\Delta w + \lambda (u-\bar{m})\right\} \phi\, \mathrm{d}x\mathrm{d}y &= 0 , \notag \\
\int\limits_{\Omega} \left\{ w+ \frac{\epsilon}{c_{0}} \Delta u -\frac{1}{c_{0}\epsilon} W'(u)\right\}\phi\,\mathrm{d}x\mathrm{d}y &=0
\end{align}
for all test functions $\phi \in H^{1}(\Omega)$.

\subsection{Phase Field Connectedness}
It remains to include the possibility of using connected and simply connected perimeters in the numerical treatment of the Ohta-Kawasaki energy. Our method to enforce such a connectedness constraint for diffuse interfaces is based on the functional $\mathcal{C}_{\epsilon}$, introduced in~\cite{MR3590663, MR4011685}, and given by
\begin{align}\label{pathPenfunc}
\mathcal{C}_{\epsilon}^{(1)}(u) = \int\limits_{\Omega}\int\limits_{\Omega} \beta_{\epsilon}(u(x))\beta_{\epsilon}(u(y)) \operatorname{d}^{\psi_\eps(u)}(x,y) ~\mathrm{d}x\mathrm{d}y,
\end{align} 
where $\beta_{\epsilon}, \psi_{\epsilon}$ are continuous functions such that 
\begin{align*}
\beta_{\epsilon}, \psi_\eps \geq 0, \ \ \beta_{\epsilon}(z)= 0 \Leftrightarrow z \in [0,1-\alpha_\eps], \ \ \psi_\eps(z) > 0 \Leftrightarrow z \in [0,1-\alpha_\eps] 
\end{align*}
with $\alpha_\eps = \eps^s$ for some $0<s<1/2$ and $\operatorname{d}^\psi$ is a geodesic distance with local weight $\psi$.

Adding the functional in~\eqref{pathPenfunc} to a given functional in a diffuse interface approach then ensures approximate connectedness of the phase $\{u\approx 1\}$. Connectedness of the phase $\{1-u\approx 1\}=\{u\approx 0\}$ on the other hand can be achieved by adding the functional
\begin{align*}
\mathcal{C}_{\epsilon}^{(2)}(u) = \int\limits_{\Omega}\int\limits_{\Omega} \beta_{\epsilon}(1-u(x))\beta_{\epsilon}(1-u(y)) \,\operatorname{d}^{\psi_\eps(1-u)}(x,y) ~\mathrm{d}x\mathrm{d}y,
\end{align*}
thus adding both $\mathcal{C}_{\epsilon}^{(1)}(u)$ and $\mathcal{C}_{\epsilon}^{(2)}(u)$ to a given functional serves to keep the phase $\{u\approx 1\}$ simply connected in our two-dimensional setting.

Incorporating the connectedness constraint in the gradient flow dynamics in~\eqref{gradientflow} then leads to
\begin{align}\label{functional2}
\left<\partial_{t}u,\phi\right>_{H^{-1}(\Omega)}=- \left[\delta_{u;\phi}\F_\eps^{\lambda}(u)+\frac{\zeta_{1}}{\epsilon^{\kappa}}\delta_{u;\phi}\mathcal{C}_{\epsilon}^{(1)}(u) +\frac{\zeta_{2}}{\epsilon^{\kappa}}\delta_{u;\phi}\mathcal{C}_{\epsilon}^{(2)}(u)\right] \ \ \forall \ \phi \in H^{1}(\Omega)
\end{align}
for parameters $\kappa>0$, $\zeta_{1},\zeta_{2} \geq 0$. Considering now the system of equations in~\eqref{system} we are finally faced with solving 
\begin{align}\label{system1}
\int\limits_{\Omega}\left\{ \partial_{t}u-\Delta w + \lambda (u-\bar{m})\right\} \phi ~\mathrm{d}x\mathrm{d}y &= 0 , \notag \\
\int\limits_{\Omega} \left\{ w+ \frac{\epsilon}{c_{0}} \Delta u -\frac{1}{c_{0}\epsilon} W'(u)\right\}\phi ~\mathrm{d}x\mathrm{d}y - \left(\frac{\zeta_{1}}{\epsilon^{\kappa}}\delta_{u;\phi}\mathcal{C}_{\epsilon}^{(1)}(u)+\frac{\zeta_{2}}{\epsilon^{\kappa}}\delta_{u;\phi}\mathcal{C}_{\epsilon}^{(2)}(u)\right) &=0
\end{align}
for all test functions $\phi \in H^{1}(\Omega)$.

\subsection{Discretization}

To compute approximate solutions of the system in~\eqref{system1} numerically, we use P1-finite elements and obtain the system of equations
\begin{align*}
\left(\frac{u_{h}^{n}-u_{h}^{n-1}}{\Delta t},v_{h}\right) &= - (\nabla w_{h}^{n},\nabla v_{h})-  \lambda (u_{h}^{n}-\bar{m},v_{h}) \\
(w_{h}^{n},v_{h}) &= \frac{\epsilon}{c_{0}} (\nabla u_{h}^{n},\nabla v_{h}) + \frac{1}{c_{0}\epsilon}\big(W'(u_{h}^{n},u^{n-1}_{h}),v_{h}\big) \\
 & \quad \quad \quad + \zeta_{1} \epsilon^{-\kappa}\delta_{u^{n-1}_{h};v_{h}}\mathcal{C}_{\epsilon}^{(1)}(u^{n-1}_{h})\\
 & \quad \quad \quad  +\zeta_{2} \epsilon^{-\kappa}\delta_{u^{n-1}_{h};v_{h}}\mathcal{C}_{\epsilon}^{(2)}(u^{n-1}_{h}), 
\end{align*}
which uses a linearized version of the double well potential $W(u)$. Specifically, we use the approximation
\[
W'(u^{n})\approx \frac{u^{n}}{2}\left((u^{n-1}-1)^{2}+u^{n-1}(u^{n-1}-1)\right)= W'(u^{n},u^{n-1}). 
\]

This linearization was used in a similar way in~\cite{parsons2012numerical} for the numerical approximation of local minimizers of the Ohta-Kawasaki Energy. There, further relevant issues regarding stability and boundedness for a similar linearization are treated. 

The explicit treatment and discretization of the variation of the functionals $\mathcal{C}_{\epsilon}^{(1)}$ and $\mathcal{C}_{\epsilon}^{(2)}$ are discussed in detail in \cite{Dondl_18f}. As described there, we use a Dijkstra-type algorithm, based on ideas in \cite{MR2684290,MR3337998}, to compute the variation of a discretized geodesic distance.

\subsection{Numerical Results}\label{section numerical}

We now consider a fully discrete gradient flow of the functional in~\eqref{functional2}. For the numerical experiments, the functions $\beta_{\epsilon}$ and $\psi_\eps$ are given as in~\cite{MR4011685} 
\begin{align*}
\beta_\eps(s) &= \begin{cases}
0 & s\le 1-\alpha \\
\frac{c_1}{2}(s-1+\alpha)^2  & s>1-\alpha
\end{cases} \\
\quad\text{and}\\
\psi_\eps(s) &= \begin{cases} 
\frac{1}{2}(s-1+\alpha)^2  & s<1-\alpha\\
0 & s \le 1-\alpha,
\end{cases}
\end{align*}
respectively. The parameter $c_{1}$ is chosen such that $\int_{\alpha}^{1} \beta_{\epsilon}(s)~\mathrm{d}s =1$ and $\kappa$ in~\eqref{functional2} is set to $\kappa =2$. The value of $\alpha = \alpha_\eps$ changes slightly between experiments. 
  By incorporating Neumann boundary conditions as described above the mass of the initial condition is maintained during the evolution of the gradient flow. 

\subsubsection{Experiment 1} In the first numerical experiment we choose an initial condition which is approximately given by the characteristic function of the set $\{r< 0.02+0.45\cos(2\,\theta)\}$ with $r= \sqrt{x^{2}+y^{2}} $ and $\theta = \arctan(x,y)$, see Figure~\ref{fig1}. We set $\epsilon= 8 \cdot 10^{-3}$, $\tau=9.5 \cdot 10^{-9}$ and $\lambda=10606$. The mean value $\bar{m}$ is given by the initial condition as $\bar{m}\approx 0.178$. The parameters $\alpha$ and $\zeta_{1}$ are set to $\alpha=0.35$ and $\zeta_{1}=3.0$. The parameter $\zeta_{2}$ is set to zero  so we just ensure the phase $\{u\approx 1\}$ to be connected. The discretization is made up of approximately $4.6 \cdot 10^{4}$ P1 triangle elements on the square $\Omega = \big(-\frac{1}{2},\frac{1}{2}\big)$.

Without using a path-connectedness constraint two discs which repel each other form, see Figure~\ref{fig1}. They remain at a finite distance due to boundary effects. This represents a classical ``dynamically metastable'' solution to the minimization problem in~\eqref{functional2} without disconnectedness penalty, see, e.g., \cite{MR2854591}. Incorporating path-connectedness in the functional in~\eqref{functional2}, these balls cling to the Steiner-tree forming a dumbbell-like structure. 
\begin{figure}[ht!]\begin{center}
\raisebox{-0.5\height}{\includegraphics[height=3.25cm]{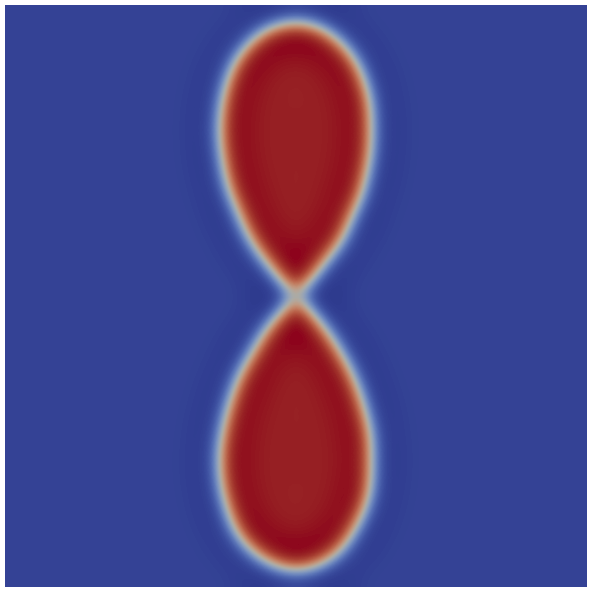}}
\raisebox{-0.5\height}{\includegraphics[height=3.25cm]{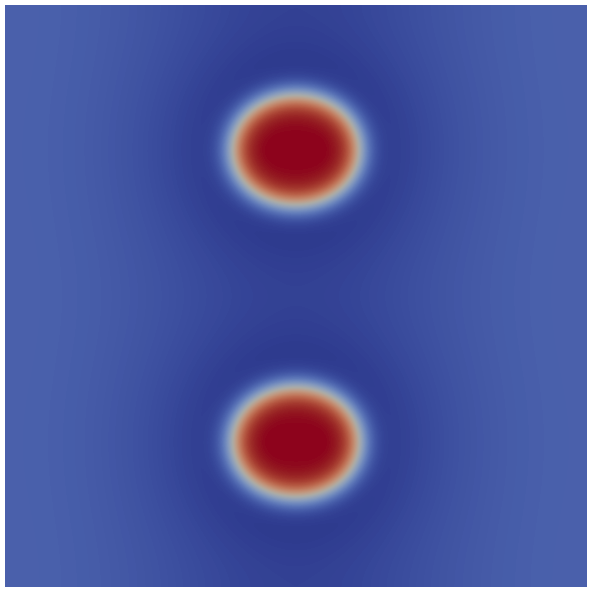}}
\raisebox{-0.5\height}{\includegraphics[height=3.25cm]{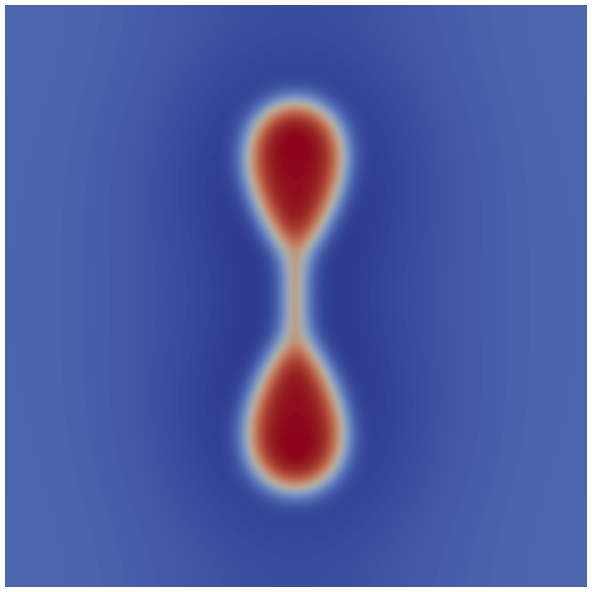}}
\raisebox{-0.5\height}{\includegraphics[height=2cm]{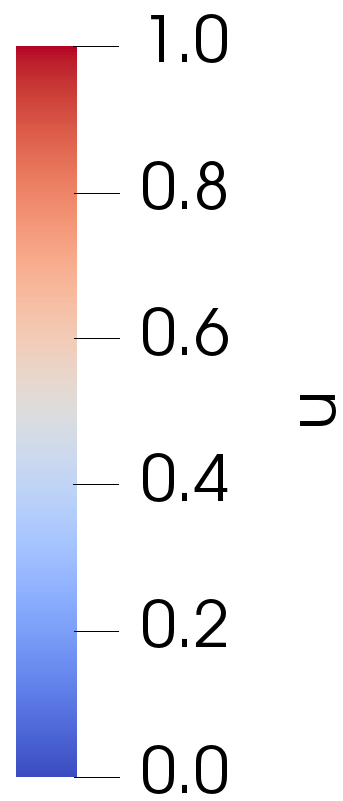}}
\caption{Results for a numerical example. From left to right:  Initial condition for $u$, ``dynamically metastable" state $u$ without disconnectedness penalty, stationary state with disconnectedness penalty. We use $\epsilon = 8 \cdot 10^{-3}$, $\lambda=10606$.}
\label{fig1}
\end{center}
\end{figure}

\begin{figure}[H]
\centering
  \qquad
    \subfigure[Without disconnectedness penalty.] {\includegraphics[width=0.46\textwidth]{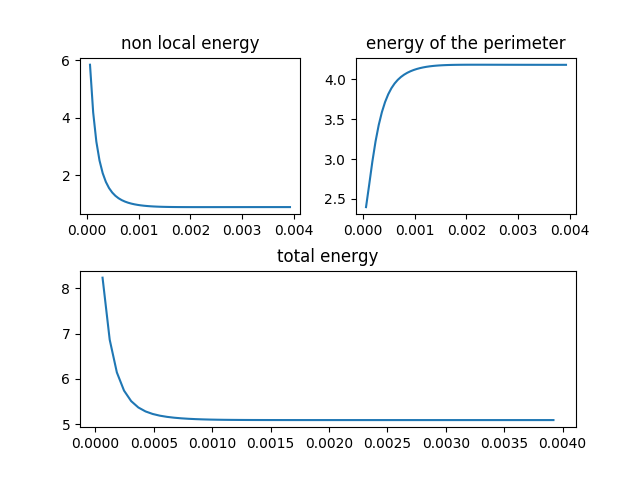}}\hspace{0.05em}
  \subfigure[With disconnectedness penalty.] {\includegraphics[width=0.46\textwidth]{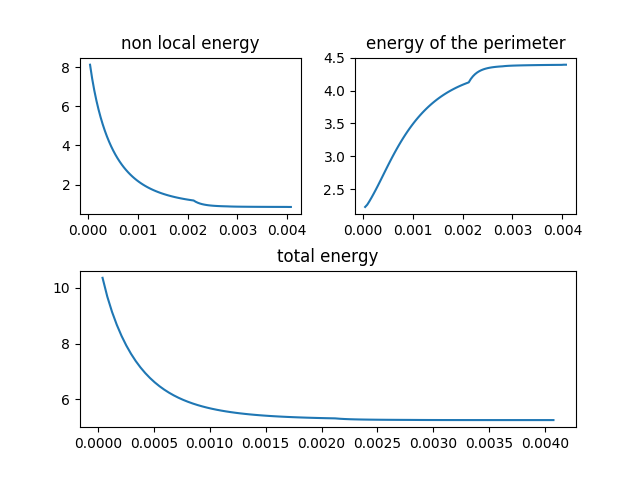}}
  \captionsetup{width=0.9\linewidth}
 \caption[]{Time evolution of the components of the free energy functional for experiment 1.}
\label{graph1}
\end{figure}

\subsubsection{Experiment 2} In the second experiment we set $\epsilon = 4\cdot 10^{-3}$, $\tau = 4.7 \cdot 10^{-9}$ and $\lambda =14849$. Further we decrease the mass $m$ and thus the initial condition is approximately given by the characteristic function of the set $\{r< 0.01+0.35\cos(2\,\theta)\}$ with the same notation as before, see Figure~\ref{fig2}. The mean value $\bar{m}$ is given by the initial condition as $\bar{m}\approx 0.1$. The parameters $\alpha$ and $\zeta_{1}$ are set to $\alpha=0.35$ and $\zeta_{1}=1.0$. The parameter $\zeta_{2}$ is set to zero so we just ensure the phase $\{u\approx 1\}$ to be connected. The discretization is again made up of approximately $4.6 \cdot 10^{4}$ P1 triangle elements on the square $\Omega = [-\frac{1}{2},\frac{1}{2}]$. 

The results of the second experiment are similar to the results from the first experiment, whereas the commingling of the two monomers is less pronounced and sharper interfaces are developed.
\begin{figure}[ht!]\begin{center}
\raisebox{-0.5\height}{\includegraphics[height=3.25cm]{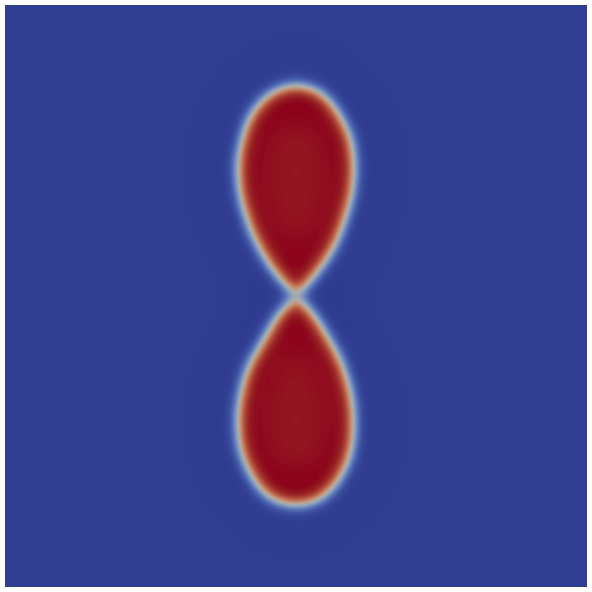}}
\raisebox{-0.5\height}{\includegraphics[height=3.25cm]{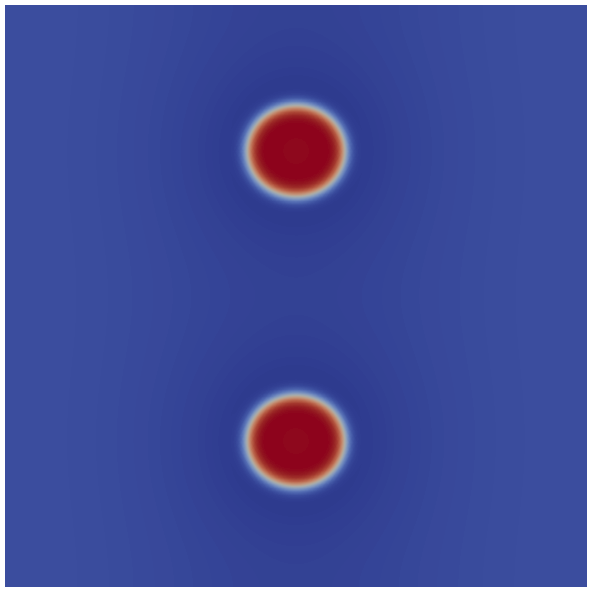}}
\raisebox{-0.5\height}{\includegraphics[height=3.25cm]{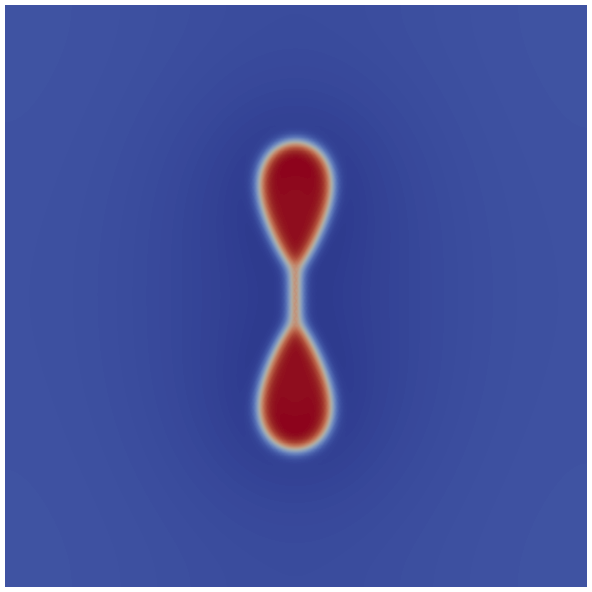}}
\raisebox{-0.5\height}{\includegraphics[height=2cm]{colorbar_u.png}}
\caption{Results for a numerical example. From left to right: Initial condition for $u$, ``dynamically metastable" state $u$ without disconnectedness penalty, stationary state with disconnectedness penalty. We use $\epsilon = 4 \cdot 10^{-3}$, $\lambda=14849$.}
\label{fig2}
\end{center}
\end{figure}

\begin{figure}[H]
\centering
  \qquad
    \subfigure[Without disconnectedness penalty.] {\includegraphics[width=0.46\textwidth]{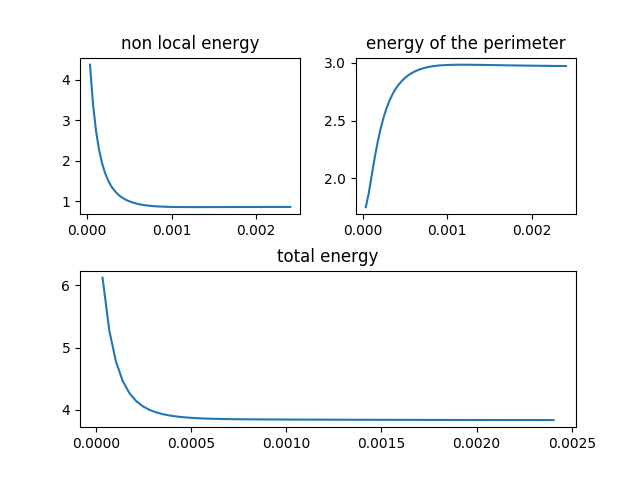}}\hspace{0.05em}
  \subfigure[With disconnectedness penalty.] {\includegraphics[width=0.46\textwidth]{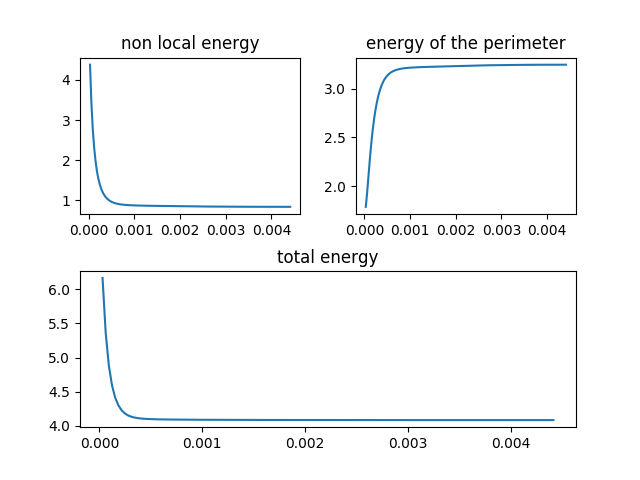}}
  \captionsetup{width=0.9\linewidth}
 \caption[]{Time evolution of the components of the free energy functional for experiment 2.}
\label{graph2}
\end{figure}

\begin{remark}
It should be mentioned that the steady states in the experiments with disconnectedness penalty indeed seem to be local minimizers of the energy in ~\eqref{functional2}. This effect can be explained by the observation that, in contrast to the case without disconnectedness penalty, every increase in the distance of the two droplets would also increase the perimeter of the phase $\{u\approx 1\}$. This is different to the ``dynamically metastable'' state occurring when abandoning the disconnectedness penalty, as every increase in the distance of the two droplets would decrease the whole energy in~\eqref{functional2}. 
\end{remark}

\subsubsection{Experiment 3} In a final third experiment we illustrated the effect when penalizing simple-connectedness of the phase $\{u\approx 1\}$. This is achieved by setting $\zeta_{1}=\zeta_{2}=0.01$ in~\eqref{functional2}. We now take the initial condition for $u$ as an approximation of the characteristic function of the set $\{r< 0.4+0.2\cos(2\,\theta)\}$, see Figure~\ref{fig3}. We further set $\epsilon = 3 \cdot 10^{-3}$, $\tau = 3 \cdot 10^{-9}$ and $\lambda =20000$. The parameter $\alpha$ is now set to $\alpha= 0.068$ and the discretization is made up of approximately $1.4 \cdot 10^{5}$ P1 triangle elements on the square $\Omega = [-1,1]$. The mean value $\bar{m}$ is thus given by the initial condition as $\bar{u}\approx 0.141$.

Due to the chosen initial condition and the value of $\lambda$, tubular structures occur in the simulation without disconnectedness penalty. These multiply-connected tubular structures are further typical ``dynamically metastable'' states of the Ohta-Kawasaki functional, see, e.g., \cite{MR2854591}. When we incorporate the simple-connectedness penalty, the tube tears open forming a simply connected structure for $\{u\approx 1\}$, see Figure~\ref{fig3}. As the impact of the boundedness of the reference domain is more visible than in the experiments before, the relevance of this experiment is more justified by effects inherited by the phase field model. An appropriate connection to the results in Section~\ref{section large mass} may not exist.

\begin{figure}[ht!]\begin{center}
\raisebox{-0.5\height}{\includegraphics[width = 3.25cm]{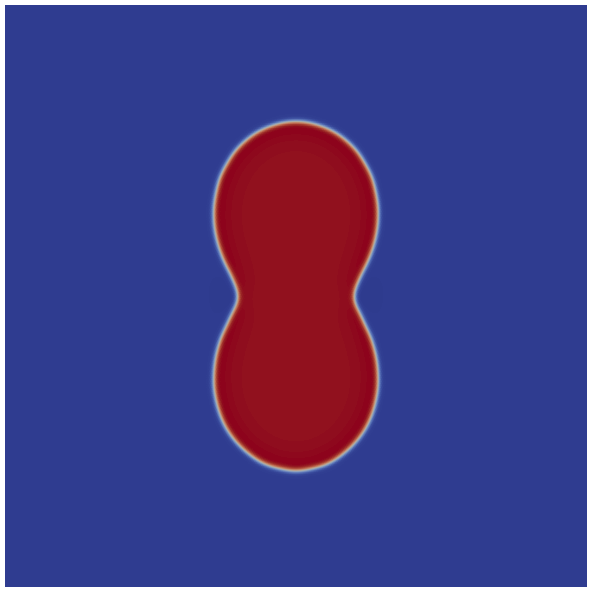}}
\raisebox{-0.5\height}{\includegraphics[width = 3.25cm]{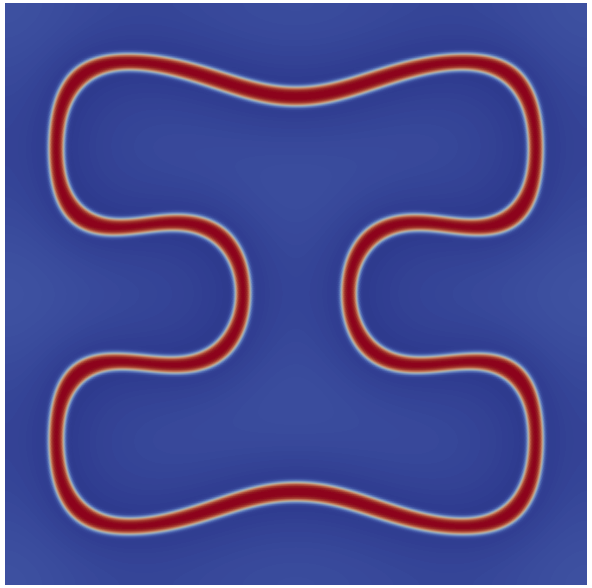}}
\raisebox{-0.5\height}{\includegraphics[width = 3.25cm]{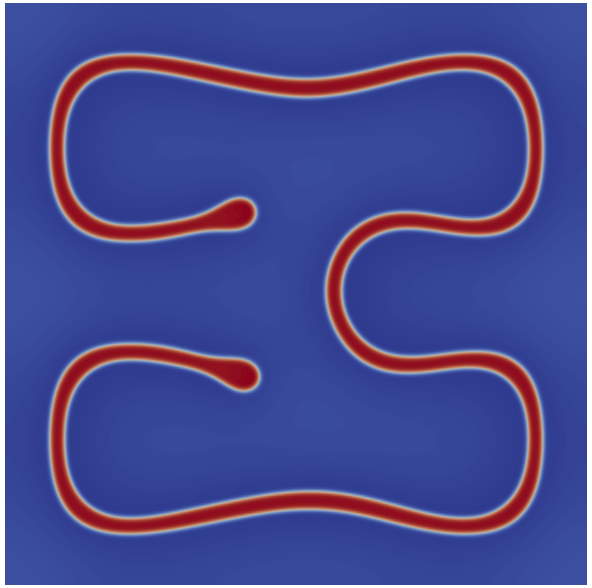}}
\raisebox{-0.5\height}{\includegraphics[height=2cm]{colorbar_u.png}}
\caption{Results for a numerical example. From left to right: Initial condition for $u$, ``dynamically metastable" state $u$ without disconnectedness penalty, ``dynamically metastable" state using simple-connectedness constraint. We use $\epsilon = 3 \cdot 10^{-8}$, $\lambda=20000$.}
\label{fig3}
\end{center}
\end{figure}

\begin{figure}[H]
\centering
  \qquad
    \subfigure[Without disconnectedness penalty.] {\includegraphics[width=0.46\textwidth]{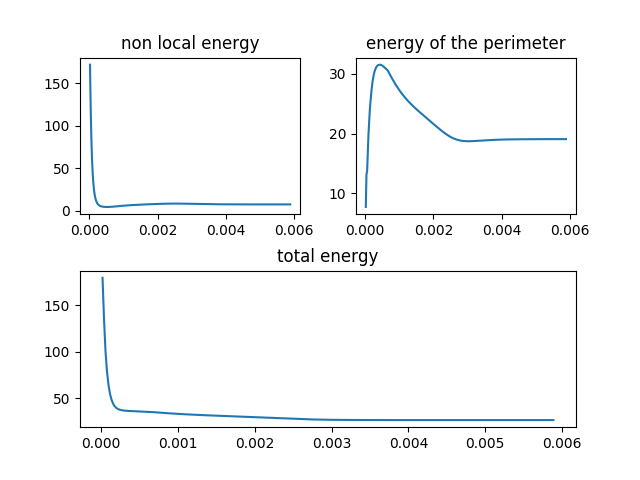}}\hspace{0.05em}
  \subfigure[With disconnectedness penalty.] {\includegraphics[width=0.46\textwidth]{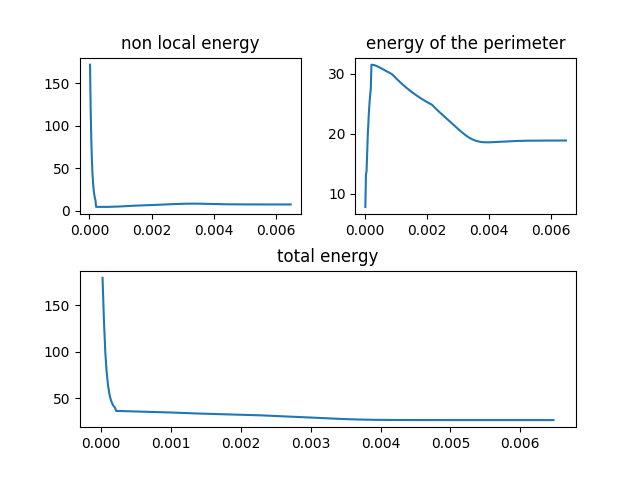}}
  \captionsetup{width=0.9\linewidth}
 \caption[]{Time evolution of the components of the free energy functional for experiment 3.}
\label{graph3}
\end{figure}

\bibliographystyle{alphaabbr}
\bibliography{quellen}

\end{document}